\newtheorem{definition}{Definition}
\newtheorem{theorem}{Theorem}[]
\newtheorem{corollary}[theorem]{Corollary}
\newtheorem{lemma}{Lemma}
\theoremstyle{definition}
\begin{document}
\title{A characterization of the Centers of Chordal Graphs}
\author{James M.\ Shook$^{1,3}$ and Bing Wei$^{2}$}
\date{\today}

\footnotetext[1]{National Institute of Standards and Technology, Computer Security Division, Gaithersburg, MD; {\tt james.shook@nist.gov}. The work of Dr. Shook was supported in part by Graduate Assistance in Areas of National Need (GAANN) while attending The University of Mississippi.}
\footnotetext[2]{University of Mississippi, Oxford, MS; {\tt bwei@olemiss.edu}. The work of B. Wei was supported in part by the Summer Research Grant of College of Liberal Arts at The University of Mississippi.}
\footnotetext[3]{Official Contribution of the National Institute of Standards and Technology; Not subject to copyright in the United States.}
\maketitle
\begin{abstract} A graph is $k$-chordal if it does not have an induced cycle with length greater than $k$. We call a graph chordal if it is $3$-chordal. Let $G$ be a graph. The distance between the vertices $x$ and $y$, denoted by $d_{G}(x,y)$, is the length of a shortest path from $x$ to $y$ in $G$. The eccentricity of a vertex $x$ is defined as $\epsilon_{G}(x)= \max\{d_{G}(x,y)~|~y\in V(G)\}$. The radius of $G$ is defined as $Rad(G)=\min\{\epsilon_{G}(x)~|~x\in V(G)\}$. The diameter of $G$ is defined as $Diam(G)=\max\{\epsilon_{G}(x)~|~x\in V(G)\}$. The graph induced by the set of vertices of $G$ with eccentricity equal to the radius is called the center of $G$. In this paper we present new bounds for the diameter of $k$-chordal graphs, and we give a concise characterization of the centers of chordal graphs.
\end{abstract}
\section{Introduction}

All graphs considered in this paper are finite, simple, and undirected. For terminology and notations not defined here, we refer the reader to \cite{diestel}. We let $V(G)$ and $E(G)$ denote the set of vertices and the set of edges of a graph $G$, respectively. For a vertex set or an edge set $S$ of $G$, we use $|S|$ for the size of $S$. A set $S$ of pairwise adjacent vertices is said to be a clique or an $|S|$-clique. A complete graph on $t$ vertices, denoted by $K_{t}$, is a graph whose vertex set is a $t$-clique. For $S\subseteq V(G)$, we use the notation $\langle S \rangle$ to represent the subgraph induced by $S$ in $G$. For a vertex $x$ or a vertex set $S$ of $G$, we use $G-x$ or $G-S$ for the subgraph of $G$ induced by $V(G)-\{x\}$ or $V(G)-S$, respectively. 
 
The \textbf{distance} between the vertices $x$ and $y$, denoted by $d_{G}(x,y)$, is the length of a shortest path from $x$ to $y$ in $G$. Let $d_{G}(X,S)=\min\{d_{G}(x,s)~|~x\in X,~s\in S\}$ for vertex sets $X$ and $S$ in $G$. We will simply write $d(X,S)$ for $d_{G}(X,S)$ if it is clear that $G$ is under consideration. When $X=\{x\}$, we will sometimes write $d(x,S)$ for $d(\{x\},S)$ and $d(X,S)$. For $S\subseteq V(G)$, we let $N(S)$ be the neighbors of vertices in $S$, $N(x)=N(S)$ when $S=\{x\}$, and $N[S]=N(S)\cup \{S\}$. A set $S\subseteq V(G)$ is said to {\bf dominate} $G$ if every vertex not in $S$ has a neighbor in $S$.  
 
Let $G$ be a graph. The \textbf{eccentricity} of a vertex $x$ is defined as $\epsilon_{G}(x)= \max\{d(x,y)~|~y\in V(G)\}$. The \textbf{radius} of $G$ is defined as $Rad(G)=\min\{\epsilon_{G}(x)~|~x\in V(G)\}$. The \textbf{diameter} of $G$ is defined as $Diam(G)=\max\{\epsilon_{G}(x)~|~x\in V(G)\}$. If $d(x,y)=Diam(G)$ for every pair of vertices $x$ and $y$ in $T\subseteq V(G)$, then we say that $T$ is \textbf{diametrical}. The \textbf{center} of a graph $G$ is the graph induced by $C(G)=\{x~|~\epsilon_{G}(x)=Rad(G)\}$. Let $C^{0}=V(G)$ and $C(G)=C^{1}(G)$, and for $i\geq 2$ let $C^{i}(G)$ denote $C(\langle C^{i-1}(G)\rangle)$. If $V(G)=C(G)$, then $G$ is said to be {\bf self-centered}. When there is no confusion on the graph under consideration we use $R$ for $Rad(G)$, $D$ for $Diam(G)$, $C$ for $C(G)$, and $C^{i}$ for $C^{i}(G)$. 

A $k$-cycle is a cycle with $k$ vertices. A graph is \textbf{$k$-chordal} if it does not have an induced $(k+1)$-cycle. We call a graph chordal if it is $3$-chordal. Note that for every graph $G$ there exists a $k$ such that $G$ is $k$-chordal. 

If $k\geq4$ and $G$ is a $k$-chordal graph, then $G$ is the center of a $k$-chordal graph $G'$ with vertex set $V(G)\cup \{w_{u},w_{v}, u, v\}$ and edge set \[E(G)\cup \{uw_{u},vw_{v}\}\cup \{w_{u}x, w_{v}x~| ~\forall x\in V(G)\}.\] The construction given above doesn't work for chordal graphs because any two non-adjacent vertices in $G$ would form an induced four cycle with $w_{u}$ and $w_{v}$ in the new graph. The goal of this paper is to present a concise characterization of the centers of chordal graphs. In the process we present new bounds for the diameter of $k$-chordal graphs for $k\geq 3$.

A partial characterization of the centers of chordal graphs can be pieced together using results from \cite{OPCCG, kdom, CGCG}. A characterization for chordal graphs similar to ours was given in \cite{chepoi}. However, our approach is different from previous work as we are able to provide some shorter proofs and generalizations of known results. 

In Section~\ref{sec:tstretched} we develop the concept of a $t$-stretched sets and then use it to prove many of our results. For instance, in Section~\ref{sec:tstretched} we prove the following theorem that is a generalization of a theorem found in \cite{kdom}. 

\begin{theorem}\label{T: halfAtleast} If $G$ is a connected $k$-chordal graph, then \[Rad(G)\geq \bigg\lceil \frac{Diam(G)}{2} \bigg\rceil\geq \bigg\lfloor \frac{Diam(G)}{2} \bigg\rfloor \geq Rad(G)-\bigg\lfloor\frac{k}{2}\bigg\rfloor. \] and \[\max\left\{Rad(G),2Rad(G)-2\bigg\lfloor\frac{k}{2}\bigg\rfloor\right\}\leq Diam(G)\leq 2Rad(G).\]
\end{theorem}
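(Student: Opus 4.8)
My plan is to observe that both displayed chains are, after routine arithmetic, equivalent to three statements about $R := Rad(G)$ and $D := Diam(G)$. Writing $m := \lfloor k/2\rfloor$, the middle inequality $\lceil D/2\rceil \ge \lfloor D/2\rfloor$ and the bound $R \le D$ are immediate, while (using that $R$ and $m$ are integers) $R \ge \lceil D/2\rceil$ is equivalent to $D \le 2R$ and $\lfloor D/2\rfloor \ge R - m$ is equivalent to $D \ge 2R - 2m$. So both chains follow once I establish
\[ R \le D, \qquad D \le 2R, \qquad D \ge 2R - 2m. \]
The first two hold in every connected graph: $R \le D$ is the definition, and for $D \le 2R$ I fix a center vertex $c$ with $\epsilon_G(c) = R$ and apply the triangle inequality $d(x,y) \le d(x,c)+d(c,y) \le 2R$ for all $x,y$. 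Only the last inequality uses $k$-chordality, and it carries the entire content of the theorem.

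To prove $D \ge 2R - 2m$ I would again start from a center $c$ and a vertex $u$ with $d(c,u) = R$, and try to produce a vertex $v$ lying on the ``far side'' of $c$ from $u$ with $d(u,v) \ge 2R - 2m$. Since $R$ is the minimum eccentricity, every neighbor of $c$ has eccentricity at least $R$; taking the neighbor $c_1$ of $c$ on a fixed $c$--$u$ geodesic furnishes a vertex $v$ with $d(c_1,v) \ge R$, which forces $d(c,v) \ge R-1$ and, since $d(c_1,v) > d(c_1,u) = R-1$, places $v$ genuinely on the opposite side of $c$. The target then becomes the shortcut estimate $d(u,v) \ge d(c,u) + d(c,v) - 2m$: that no $u$--$v$ geodesic can save more than $2m$ over the detour through $c$.

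The heart of the argument is this shortcut bound, and it is where $k$-chordality enters. I would fix geodesics $P$ from $c$ to $u$ and $Q$ from $c$ to $v$ and a geodesic $W$ from $u$ to $v$, and analyze the closed walk $P \cup W \cup Q$. After trimming the arcs where $W$ runs along $P$ or $Q$, one extracts an induced cycle through $c$; because $G$ is $k$-chordal its length is at most $k$, and since the two arcs leaving $c$ lie on shortest paths, the portion of the cycle not on $W$ has length at most $2\lfloor k/2\rfloor = 2m$ (the parity being absorbed by the nonempty $W$-arc of the cycle). Charging this portion to the difference $d(c,u) + d(c,v) - d(u,v)$ yields the saving bound.

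The main obstacle, and the delicate point in making the constant exactly $2m$ rather than $2m+1$, is twofold. First, a single induced cycle controls the detour only when $P$ and $Q$ diverge at $c$; if all radius-distant vertices on the far side share a common first step with $u$, that shared prefix cancels against $c$ and erodes the full $2R$, so I must guarantee two radius-distant vertices whose $c$-geodesics are internally disjoint. Second, I must upgrade $d(c,v) \ge R-1$ to $d(c,v) = R$, so that both endpoints sit at distance exactly $R$; otherwise the estimate degrades by one. Both refinements rest on the centrality of $c$ together with chordality, and I expect the cleanest way to organize the simultaneous existence of such an oppositely placed, radius-distant pair and the induced-cycle accounting is through the spread-out sets introduced in Section~\ref{sec:tstretched}, which package precisely the statement that geodesics from $c$ to two such vertices can be shortcut by at most $2\lfloor k/2\rfloor$.
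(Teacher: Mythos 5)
Your reduction of both displayed chains to the three inequalities $R\le D$, $D\le 2R$, $D\ge 2R-2\lfloor k/2\rfloor$ is correct, and your proofs of the first two are fine; but the third, which as you say carries the entire content, is not proved, and the gap is not a missing detail: the ``shortcut estimate'' is false under the hypotheses you impose. Take the chordal graph ($m=\lfloor 3/2\rfloor=1$) formed by the stacked triangles $\{c,c_1,c_1'\}$, $\{c_1,c_1',a_2\}$, $\{c_1',a_2,w_2\}$, $\{a_2,w_2,a_3\}$, $\{w_2,a_3,x_3\}$, $\{a_3,x_3,u\}$ (a $2$-tree, hence chordal), together with pendant paths $w_2v_3v$ and $cp_1p_2p_3p_4$. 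Here $c$ is the unique center and $R=4$: every other vertex is at distance at least $5$ from $p_4$ or from $u$. The path $P=c\,c_1\,a_2\,a_3\,u$ is a geodesic, so $d(c,u)=R$ and $c_1$ is the neighbor of $c$ on $P$; moreover $d(c_1,v)=4\ge R$ (every $c_1$--$v$ path must pass through $w_2$, and $d(c_1,w_2)=d(w_2,v)=2$), and $d(c,v)=R$ exactly, so $v$ is a legitimate choice even for your ``upgraded'' version. Yet $d(u,v)=4$, while $d(c,u)+d(c,v)-2m=4+4-2=6$. The estimate fails because the condition $d(c_1,v)\ge R$ only forbids $c$--$v$ geodesics from using $c_1$; here $u$ and $v$ have geodesics sharing the prefix $c\,c_1'\,w_2$ through the \emph{other} neighbor of $c$. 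You flag exactly this danger yourself (the shared-prefix obstacle, and the $d(c,v)=R$ upgrade), but you resolve neither; writing that you ``expect'' the stretched-set machinery to supply both is not a proof, and the counterexample shows the needed statement is about \emph{existence} of a good pair, not about an arbitrary pair meeting your conditions. (The induced-cycle accounting is also unjustified as stated: the union of three geodesics need not contain an induced cycle through $c$ at all --- precisely when prefixes are shared --- let alone one capturing the whole defect $d(c,u)+d(c,v)-d(u,v)$.)

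The paper sidesteps the selection problem entirely by arguing in the reverse direction. It starts from a diametrical pair, stretches it to a $\lceil D/2\rceil$-stretched set $T$, and considers the minimal separator $X_u\subseteq N(u,\lceil D/2\rceil)$ of $u\in T$ from $T-u$. Chordality enters exactly once, in Lemma~\ref{L: BasicSDS}~\ref{BasicSDS:cond1}: two vertices of $X_u$ are joined by shortest paths through both sides of the separator, producing an induced cycle of length at most $k$, so $X_u$ has diameter at most $\lfloor k/2\rfloor$. This yields $\epsilon_G(x)\le\lfloor D/2\rfloor+\lfloor k/2\rfloor$ for every $x\in X_u$ (Lemma~\ref{L: BasicSDS}~\ref{BasicSDS:cond5}), and since $R\le\epsilon_G(x)$ one gets $R\le\lfloor D/2\rfloor+\lfloor k/2\rfloor$, i.e.\ $D\ge 2R-2\lfloor k/2\rfloor$, with no need ever to exhibit two far-apart vertices as seen from a center. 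If you want to rescue a center-based argument, you must prove that \emph{some} pair of radius-distant vertices has Gromov product at most $\lfloor k/2\rfloor$ at $c$; that existence claim is essentially the theorem itself, and the natural selection rule you propose does not achieve it.
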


The centers of many graphs are disconnected; thus, it is a natural question to ask how far apart the vertices of $C(G)$ in a $k$-chordal graph $G$ are. Using Theorem~\ref{T: halfAtleast} we prove in Section~\ref{sec:tstretched} Theorem~\ref{T: ISCenter}, which generalizes a result given in \cite{CGCG} that showed that the diameter of the center of a chordal graph is at most three. 

\begin{theorem}\label{T: ISCenter}Every connected $k$-chordal graph $G$ has a connected induced subgraph $H$ such that $C(G)\subseteq V(H)$, $Rad(H)\leq 2\big\lfloor\frac{k}{2}\big\rfloor$, and  $Diam(H)\leq 3\big\lfloor\frac{k}{2}\big\rfloor$.
\end{theorem}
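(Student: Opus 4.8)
The plan is to first locate a single \emph{hub} vertex that is close to the entire center, and then to connect the center through this hub into a \emph{lean} induced subgraph whose intrinsic radius and diameter match the distances measured in $G$. Throughout write $m=\lfloor k/2\rfloor$, and recall from Theorem~\ref{T: halfAtleast} that $0\le 2R-D\le 2m$. I would fix a diametral pair $u,v$ with $d(u,v)=D$ together with a shortest $u$--$v$ path $P$. For any $x\in C$ the triangle inequality gives $d(x,u)+d(x,v)\ge D$, while $d(x,u),d(x,v)\le \epsilon_G(x)=R$; since $D\ge 2R-2m$, this forces $d(x,u),d(x,v)\in[R-2m,R]$. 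Thus every center vertex sits near the middle of $P$, and it is exactly this concentration that I will feed into the chordality arguments below.

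With the setup in place I would next isolate two purely metric facts about the set $C$, measured in $G$: \textbf{(i)} there is a vertex $c$ with $d(c,x)\le 2m$ for every $x\in C$ (the center has $G$-radius at most $2m$), and \textbf{(ii)} $d(x,y)\le 3m$ for all $x,y\in C$ (the center has $G$-diameter at most $3m$). Both would be proved by contradiction through the $t$-stretched machinery developed in this section: if some center vertex were farther than $2m$ from every candidate hub, or if two center vertices were more than $3m$ apart, then combining the offending geodesic with the near-middle concentration on $P$ yields a $t$-stretched configuration with $t$ large enough to produce an induced cycle of length exceeding $k$, contradicting $k$-chordality. The constants $2m$ and $3m$ are the right ones heuristically: the midpoint of a longest center-to-center geodesic is within $\lceil 3m/2\rceil\le 2m$ of its two extreme endpoints, which is the source of \textbf{(i)}, though the uniform bound over all of $C$ is cleanest to extract directly from the stretched sets. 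Fact \textbf{(ii)} is the generalization of the ``diameter of the center is at most three'' result for the chordal case $k=3$.

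Given \textbf{(i)} and \textbf{(ii)}, I would assemble $H$ inside the ball $B_G(c,2m)$ as the subgraph induced by $c$ together with, for each $x\in C$, a shortest $c$--$x$ path, and, for each pair $x,y\in C$, a path of length at most $3m$ joining $x$ and $y$ \emph{that stays inside $B_G(c,2m)$} (whose existence is the in-ball shortcut lemma, obtained by applying $k$-chordality to the cycle formed by the two legs $c$--$x$, $c$--$y$ and an $x$--$y$ geodesic). Since every vertex of $H$ lies in $B_G(c,2m)$ and shortest paths to $c$ stay in the ball, $c$ is a radius witness with $\epsilon_{H}(c)\le 2m$, so $Rad(H)\le 2m$; $H$ is connected and contains $C$ by construction; and the retained shortcuts give $d_H(x,y)\le 3m$ for center pairs, which I would then propagate to all pairs of vertices of $H$ to obtain $Diam(H)\le 3m$.

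The main obstacle is precisely this final assembly: I must keep $H$ simultaneously \emph{lean} and \emph{geodesically closed enough}. Taking $H$ to be the whole ball $B_G(c,2m)$ is too crude—already the path $P_5$ (with $m=1$) shows that a radius-$2m$ ball can have diameter $4m$ because of far, non-central vertices—whereas a naive union of legs through $c$ can omit the center-to-center shortcuts, so that $d_H$ exceeds the $G$-distance and the $3m$ bound is lost. The crux is therefore to use $k$-chordality twice: once to guarantee that the length-$\le 3m$ shortcuts actually lie inside the chosen $H$, and once to improve the trivial estimate $d_H(p,q)\le d_H(p,c)+d_H(c,q)\le 4m$ to $3m$ by extracting, from two geodesic legs meeting at the hub, a chord close to $c$ that reroutes within $H$. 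Controlling these pairwise intrinsic distances globally—not merely for center pairs—is where I expect the real work to lie.
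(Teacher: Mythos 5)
Your plan has a genuine gap at exactly the point you flag yourself: the final assembly of $H$. Your two metric facts \textbf{(i)} and \textbf{(ii)} are true and do follow from the machinery of Section~\ref{sec:tstretched}, but you never actually prove them --- the claim that a violation ``yields a $t$-stretched configuration with $t$ large enough to produce an induced cycle of length exceeding $k$'' is a gesture, not an argument --- and, more importantly, even granting them your construction does not close. A single hub vertex $c$ is the wrong object: as your own $P_5$ example shows, the induced ball of radius $2m$ around a point can have diameter $4m$, and your proposed repair, an ``in-ball shortcut lemma'' giving for every pair of center vertices a path of length at most $3m$ inside $B_G(c,2m)$, is neither proved nor sufficient, since $Diam(H)\le 3m$ must hold for \emph{all} pairs of vertices of $H$, including the non-central vertices sitting on the retained legs. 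You explicitly defer this (``where I expect the real work to lie''), so what you have is a program, not a proof.

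The idea you are missing is that the hub should be a \emph{set}, not a vertex. The paper takes $u$ in a $\big\lceil \frac{D}{2}\big\rceil$-stretched set, lets $X_u$ be the minimal separator inside $N(u,\lceil \frac{D}{2}\rceil)$ from Definition~\ref{defn:tStretched}, and invokes Lemma~\ref{L: BasicSDS}~\ref{BasicSDS:cond1}: any two vertices of $X_u$ are at distance at most $m=\lfloor k/2\rfloor$. This is where $k$-chordality enters, once and for all. It then shows $C(G)\subseteq N_{\leq}(X_u,m)$ by a two-case eccentricity contradiction using Lemma~\ref{L: BasicSDS}~\ref{BasicSDS:cond5}, and simply sets $H=\langle N_{\leq}(X_u,m)\rangle$. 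That choice makes the assembly you are struggling with trivial: every vertex on a shortest path from $a\in V(H)$ to a nearest $x\in X_u$ is itself within distance $m$ of $X_u$, hence lies in $H$, so $d_H(a,x)\le m$; likewise shortest paths between two vertices of $X_u$ stay in $H$. Consequently $d_H(a,b)\le m+m+m=3m$ for \emph{all} $a,b\in V(H)$, and $d_H(x',b)\le 2m$ for any fixed $x'\in X_u$, giving $Diam(H)\le 3m$ and $Rad(H)\le 2m$ with no further chordality argument and no separate treatment of non-central vertices. The ``thickness'' of the hub --- pairwise distances at most $m$ inside $X_u$ --- is precisely what absorbs the extra $m$ that defeats your point-hub construction.
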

From Theorem~\ref{T: ISCenter} we can see that if $G$ is a self-centered $k$-chordal graph, then $Rad(G)\leq 2\big\lfloor\frac{k}{2}\big\rfloor$. We present a characterization of self-centered graphs for $k=3$ a little later. However, we don't know much about self-centered graphs with induced $k$-cycles for $k>3$. A $k$-cycle is an easy example of a self-centered $k$-chordal graph that has radius equal to $\big\lfloor\frac{k}{2}\big\rfloor$. It is a quick exercise to show that non-trivial self-centered $k$-chordal graphs must be $2$-connected. When $k$ is odd the graph with an induced $k$-cycle $x_{1}\ldots x_{k}x_{1}$ along with $k$ vertices $\{y_{1},\ldots,y_{k}\}$ such that $y_{i}$ is adjacent to $x_{i}$, and $x_{i+1}$ modulus $k$ is also self-centered with radius $\big\lfloor\frac{k}{2}\big\rfloor+1$. However, for $k$-chordal graphs with induced $k$-cycles such that $k>3$ we don't know if there exists a self-centered $k$-chordal graph with $Rad(G)=2\big\lfloor\frac{k}{2}\big\rfloor$.  It would be interesting to see a characterization or at the very least some examples for higher $k$.

Our approach to characterizing the centers of chordal graphs is to use Theorem~\ref{T: halfAtleast} to partition the connected chordal graphs into three classes. We show that for each graph $G$ in a partition, the graph $\langle C(G) \rangle$ has a necessary structure. If $Diam(G)=2Rad(G)-1$, then we show that $C(G)$ has two disjoint cliques that each dominates $\langle C(G) \rangle$. If $Diam(G)=2Rad(G)$, then the authors in \cite{CCG2} showed that the center of $G$ is a clique.  If $Diam(G)=2Rad(G)-2$, then we show that $\langle C^{2}(G) \rangle$ is self-centered with radius two. We then show that if a connected chordal graph $G$ with $Diam(G)\leq 3$ has any of those structures, then it is the center of some chordal graph $H$. These necessary and sufficient conditions can be summarized in the following theorem.

\begin{theorem}\label{T: CIFF3} A graph $G$ with at least two vertices is the center of some chordal graph if and only if $G$ is connected, chordal, $Diam(G)\leq 3$ and either $\langle C(G) \rangle$ is self-centered with radius two or $G$ contains two disjoint dominating cliques.\end{theorem}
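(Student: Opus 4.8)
The plan is to prove Theorem~\ref{T: CIFF3} in two directions, with the necessity direction organized around the three-way partition of chordal graphs by the quantity $Diam(G)$ relative to $Rad(G)$, and the sufficiency direction handled by explicit constructions.

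\medskip
\noindent\textbf{Necessity.} Suppose $G$ is the center of some chordal graph $H$, so that $G=\langle C(H)\rangle$. First I would record that since induced subgraphs of chordal graphs are chordal, $G$ is chordal; connectedness of $G$ must be argued separately (the center of a chordal graph is connected, a fact that should follow from the distance/eccentricity machinery developed in the $t$-stretched sets section, or from Theorem~\ref{T: ISCenter} applied to $H$). Next, Theorem~\ref{T: ISCenter} applied to $H$ yields a connected induced subgraph containing $C(H)=V(G)$ with diameter at most $3\lfloor k/2\rfloor=3$ when $k=3$; the bound $Diam(G)\le 3$ should follow by controlling distances within the center, since for chordal graphs $\lfloor k/2\rfloor=1$. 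The heart of necessity is then the case analysis driven by Theorem~\ref{T: halfAtleast}: for $k=3$ that theorem gives $\max\{Rad,2Rad-2\}\le Diam\le 2Rad$, so $Diam(H)\in\{2Rad(H)-2,\,2Rad(H)-1,\,2Rad(H)\}$. I would treat each case to extract the claimed structure on $\langle C(H)\rangle=G$: in the $Diam=2Rad$ case cite \cite{CCG2} to conclude $G$ is a clique (hence trivially contains two disjoint dominating cliques, or is self-centered of radius $\le$ something — one must check this degenerate subcase fits a stated alternative); in the $Diam=2Rad-1$ case produce two disjoint cliques in $C(H)$ each dominating $\langle C(H)\rangle$; and in the $Diam=2Rad-2$ case show $\langle C^2(H)\rangle$ is self-centered with radius two and then argue this forces $\langle C(G)\rangle$ itself to be self-centered of radius two. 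The main obstacle here is the last case: relating the iterated center $C^2(H)$ back to the center $\langle C(G)\rangle$ of the abstract graph $G$, and verifying the radius-two self-centeredness is preserved, will require careful use of how eccentricities inside the center relate to eccentricities in $H$ via diametrical pairs.

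\medskip
\noindent\textbf{Sufficiency.} Conversely, assume $G$ is connected, chordal, $Diam(G)\le 3$, and satisfies one of the two structural alternatives; I would construct a chordal graph $H$ with $\langle C(H)\rangle=G$. The natural template is to adjoin a small gadget of apex/pendant vertices that raises the eccentricity of every vertex outside $G$ while keeping all vertices of $G$ at the common minimum eccentricity, and without creating an induced $4$-cycle (the obstruction the introduction flags for the naive two-apex construction). I would split by which hypothesis holds. When $G$ contains two disjoint dominating cliques $Q_1,Q_2$, I would attach new vertices adjacent to $Q_1$ and $Q_2$ respectively (a dominating clique guarantees every vertex of $G$ is within distance one of the attachment point, so no long induced cycle is created, and distances in $G$ are short by $Diam(G)\le 3$), tuning the gadget so the new vertices are the unique eccentricity-maximizers. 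When $\langle C(G)\rangle$ is self-centered of radius two, I would use a correspondingly different gadget anchored at the center so that every vertex of $G$ attains eccentricity exactly $Rad(H)$ while added vertices attain more. In each construction the verifications are: (i) $H$ is chordal (check no added vertex closes an induced cycle of length $\ge 4$, using the dominating-clique or radius-two property to supply chords), (ii) every vertex of $V(G)$ has equal eccentricity in $H$, and (iii) every added vertex has strictly larger eccentricity, so $C(H)=V(G)$ exactly.

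\medskip
The step I expect to be hardest is the sufficiency construction in the radius-two self-centered case: ensuring simultaneously that the gadget keeps $H$ chordal and that it equalizes the eccentricities of all of $V(G)$ at a value strictly below those of the new vertices is delicate, because a self-centered radius-two center need not contain a dominating clique to hang the gadget on, so the attachment must be distributed and the no-induced-$4$-cycle check becomes genuinely combinatorial. A secondary difficulty, on the necessity side, is pinning down the degenerate overlaps between the three alternatives (for example when $C(H)$ is a single clique) so that the ``or'' in the statement is exhaustive and each extremal graph is accounted for by at least one branch.
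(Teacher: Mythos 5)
Your necessity direction is exactly the paper's argument: chordality of $G$ by heredity, $Diam(G)\le 3$ via Theorem~\ref{T: ISCenter}, and then the three-way split $Diam(H)\in\{2Rad(H)-2,\,2Rad(H)-1,\,2Rad(H)\}$ from Theorem~\ref{T: halfAtleast}, settled by Lemma~\ref{L:TRM2}, Lemma~\ref{L:TRMO}, and Corollary~\ref{C: TR} (\cite{CCG2}) respectively, with the clique case folded into the two-disjoint-dominating-cliques alternative just as you suggest. Two of the difficulties you flag there are non-issues. Connectedness of $G$ and the transfer of the diameter bound to distances measured inside $G$ both come from Lemma~\ref{L: shortPath} (induced paths between center vertices stay in the center, so distances in $\langle C(H)\rangle$ agree with distances in $H$, and $\langle C(H)\rangle$ is connected). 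And in the $2Rad-2$ case there is nothing to relate: since $G=\langle C(H)\rangle$, the identity $C^{2}(H)=C(\langle C(H)\rangle)=C(G)$ holds by definition, so Lemma~\ref{L:TRM2} applied to $H$ literally is the statement that $\langle C(G)\rangle$ is self-centered of radius two.

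The genuine gap is in sufficiency, exactly where you predicted: you never exhibit the gadget for the case that $\langle C(G)\rangle$ is self-centered of radius two, and the construction you fear is ``genuinely combinatorial'' is in fact trivial. The paper's Lemma~\ref{L:RCG2} takes $A$ to be the set of vertices of $G$ with eccentricity two and attaches one pendant vertex $w_{a}$ to each $a\in A$; pendant vertices lie on no cycle, so chordality is automatic, and the radius-two hypothesis enters only to guarantee that each $a_{j}\in A$ has some $a_{k}\in A$ with $d(a_{j},a_{k})=2$, hence $d(a_{j},w_{k})=3$, while every $b\in V(G)-A$ satisfies $d(b,a_{k})\le\epsilon_{G}(a_{k})=2$ and hence $d(b,w_{k})\le 3$; so every vertex of $G$ gets eccentricity exactly $3$ and every pendant gets $4$. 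Likewise your ``tuning'' in the two-dominating-cliques case needs to be pinned down: the paper (Lemma~\ref{L:TDDC}) attaches to each dominating clique $W_{i}$ a vertex $w_{i}$ joined to all of $W_{i}$ together with a pendant $w_{i}'$ on $w_{i}$; the depth-two tail is essential, since a single vertex glued to each clique would leave any eccentricity-two vertices of $G$ at eccentricity two, making the center of the new graph a proper subset of $V(G)$. Until these constructions are specified and their eccentricities verified, the backward implication of the theorem is unproven.
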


The smallest graph satisfying Theorem~\ref{T: CIFF3} is $K_{2}$. Note that a $2$-connected chordal graph $G$ with $Rad(G)=2$, $Diam(G)=3$, and $Rad(\langle C(G) \rangle)=1$ that does not have two disjoint dominating cliques cannot be the center of a chordal graph (See Figure~\ref{example}). 

\setlength{\unitlength}{.5 cm}
\begin{figure}[b]
\label{example}
\centering
\begin{tikzpicture}
    \draw (4,4) -- (3,3);
    \draw (4,4) -- (5,3);
    \draw (3,3) -- (5,3);
    \draw (2,2) -- (6,2);
    \draw (2,2) -- (5,3);
    \draw (2,2) -- (3,3);
    \draw (2,2) -- (6,1);
    \draw (2,2) -- (2,1);
    \draw (2,2) -- (1,1);
    \draw (1,1) -- (2,1);
    \draw (2,1) -- (6,1);
    \draw (6,1) -- (6,2);
    \draw (6,1) -- (5,3);
    \draw (6,1) -- (7,1);
    \draw (7,1) -- (6,2);
    \draw (6,2) -- (5,3);
    
    \draw[fill=black] (1,1) circle (.1cm);
    \draw[fill=black] (2,1) circle (.1cm);
    \draw[fill=black] (6,1) circle (.1cm);
    \draw[fill=black] (7,1) circle (.1cm);
    \draw[fill=black] (2,2) circle (.1cm);
    \draw[fill=black] (3,3) circle (.1cm);
    \draw[fill=black] (4,4) circle (.1cm);
    \draw[fill=black] (5,3) circle (.1cm);
    \draw[fill=black] (6,2) circle (.1cm);

\end{tikzpicture}
\caption{This graph is not the center of a chordal graph.}
\end{figure}
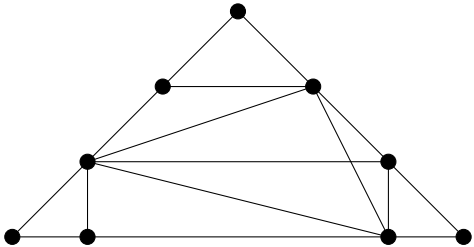

If $G$ is the center of some chordal graph and $Rad(\langle C(G) \rangle)=2$, then either $\langle C(G) \rangle$ is self-centered or $Diam(\langle C(G) \rangle)=3$. By Theorem~\ref{T: halfAtleast} we can see that self-centered chordal graphs have diameter at most two. There are many examples of self-centered chordal graphs without two disjoint dominating cliques. Therefore, it would be nice to have a characterization of chordal graphs whose center has radius two and satisfies the conditions of Theorem~\ref{T: CIFF3}.

Let $T$ and $S$ be sets of vertices in a connected graph $G$. We say $S$ is a \textbf{separator} of $T$ if two vertices of $T$ are disconnected in $G-S$. If $T=V(G)$, then we say $S$ is a separator of $G$. If a vertex set $S$ is a separator of $T$, then we say $S$ separates $T$. A separator of $T$ is said to be \textbf{minimal} if it does not have another separator of $T$ as a proper subset.

A vertex $x$ is said to be \textbf{simplicial} if $N(x)$  is a clique. We use the notation $S_{1}(G)$ to denote the set of simplicial vertices in $G$. A chordal graph is said to be minimally self-centered if the removal of a simplicial vertex results in a chordal graph that is not self-centered. 

In \cite{CGCG}, P. Das and S. B. Rao gave a characterization of minimally self-centered chordal graphs. They showed that for a minimally self-centered chordal graph $G$, $|S_{1}(G)|\geq 3$ and there exists a function $f$ from $S_{1}(G)$ into the set of vertices with degree $n-2$ such that for any $s\in S_{1}(G)$, $s\neq f(s)$ and $sf(s)\notin E(G)$.

In Theorem~\ref{T: selfCentered} we characterize all self-centered chordal graphs.  
\begin{theorem}\label{T: selfCentered}A chordal graph $G$ is self-centered if and only if $G$ is complete or $\Delta(G)\leq n-2$ and there exists a set of cliques $\{X_{i},\ldots,X_{k}\}$ such that
\begin{enumerate}
    \item $X_{i}$ separates $G$ for every $i\in\{1,\ldots,k\}$,
    \item $N(z)\cap X_{i}\neq \emptyset$ for every $z\in V(G)$,
    \item $X_{i}\cap X_{j}\neq \emptyset$ for every $i$ and $j$ in $\{1,\ldots,k\}$,
    \item \[\bigcap_{i\in\{1,\ldots,k\}}X_{i}=\emptyset,\]
    \item for $\{z,z'\}\subseteq V(G)$ there exists a $x\in N(z)\cap N(z')$ such that \[x\in \bigcup_{i\in\{1,\ldots,k\}}X_{i}.\]
\end{enumerate}
\end{theorem}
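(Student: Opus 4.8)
The plan is first to replace ``self-centered'' by a concrete metric condition. Since a complete graph is trivially self-centered, assume $G$ is connected, chordal, and not complete. I would show that self-centeredness is then equivalent to $Diam(G)=2$ together with $\Delta(G)\le n-2$: if $G$ is self-centered then $Rad(G)=Diam(G)$, non-completeness gives $Diam(G)\ge 2$, and Theorem~\ref{T: halfAtleast} with $k=3$ gives $Rad(G)\le 2$, so both equal $2$; a universal vertex would have eccentricity $1<2$, so $\Delta(G)\le n-2$. Conversely, $Diam(G)\le 2$ with $\Delta(G)\le n-2$ makes every eccentricity exactly $2$. I will also use that such a $G$ is $2$-connected, so each of its minimal separators has at least two vertices, and that minimal separators of a chordal graph are cliques.

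Sufficiency is then immediate: given cliques satisfying (1)--(5), condition (5) hands every pair a common neighbour, so $Diam(G)\le 2$, and with $\Delta(G)\le n-2$ the reduction yields that $G$ is self-centered (conditions (1)--(4) are not needed for this direction). For necessity I would take $\{X_1,\dots,X_k\}$ to be the family of \emph{all} minimal separators of $G$. The organizing observation is that $\bigcup_i X_i$ is exactly the set of non-simplicial vertices: a vertex with two non-adjacent neighbours $p,q$ lies on the path $pxq$ and hence in every $p$-$q$ separator, while simplicial vertices lie in no minimal separator. Condition (1) is the definition. For (2), a vertex $z\notin X_i$ lies in a component of $G-X_i$, a vertex of a different component is at distance exactly $2$ (by $Diam(G)=2$ and non-adjacency), and the middle vertex of a shortest path between them is forced into $X_i$; a vertex of $X_i$ has a neighbour there because $|X_i|\ge 2$. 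For (4), every vertex is non-universal, hence has a non-neighbour $w$, and any minimal $w$-separator between them avoids it, so no vertex lies in all the $X_i$.

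The delicate conditions are (3) and (5), and I expect (3) to be the main obstacle. Here I would suppose $X_i\cap X_j=\emptyset$ and chase components. Since $X_j$ is a connected clique it lies in one component $A$ of $G-X_i$; every other component of $G-X_i$ attaches to $X_i$ and therefore lies, together with $X_i$, in a single component $C$ of $G-X_j$, so all components of $G-X_j$ other than $C$ are contained in $A$. Choosing such a component $Q\neq C$ with $q\in Q\subseteq A$ and a vertex $b$ of a component of $G-X_i$ distinct from $A$, the pair $q,b$ is non-adjacent and at distance $2$, hence has a common neighbour, which is forced into $X_i\subseteq C$; that neighbour is adjacent to $q\in Q$, producing an edge between the distinct components $C$ and $Q$ of $G-X_j$ --- a contradiction. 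Keeping straight how the components of $G-X_i$ and of $G-X_j$ interleave is the most intricate part of the argument.

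For (5) the non-adjacent pairs are free: a common neighbour exists by $Diam(G)=2$ and is automatically non-simplicial, hence in $\bigcup_i X_i$. The adjacent case reduces to showing every edge $zz'$ has a non-simplicial common neighbour, which is the crux. I would argue that if all common neighbours of $z$ and $z'$ were simplicial, then for each non-neighbour $p$ of $z$ a distance-two connector $a$ would satisfy $a\not\sim z'$ (otherwise $N(a)$ would contain the non-adjacent pair $z,p$), so $p\sim z'$ would create an induced four-cycle $z\,a\,p\,z'$; chordality then forces $z$ and $z'$ to have the same non-neighbours, i.e.\ $N[z]=N[z']$. A common non-neighbour $v$ (which exists since $z$ is non-universal) then has a connector $c\in N(z)$ with $c\sim v$; as $v\not\sim z'$ we get $c\neq z'$, so $c$ is one of the supposedly simplicial common neighbours, and its clique neighbourhood would force $z\sim v$, a contradiction. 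This true-twin reduction disposes of the adjacent case and completes the necessity direction.
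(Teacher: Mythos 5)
Your proof is correct, but it takes a genuinely different route from the paper's. The paper stays inside its $t$-stretched-set machinery: it takes a maximal $1$-stretched set $T$ (necessarily $|T|\geq 3$), lets the cliques be the associated minimal separators $\{X_{u}~|~u\in T\}$, reads conditions (1)--(4) off Lemma~\ref{L: BasicSDS} (with the maximality of $T$, via Lemma~\ref{lem:largerstretchT}, supplying the empty intersection, as in the proof of Lemma~\ref{L:TRM2}), and proves condition (5) by using maximality again to force any offending pair $z,z'$ to be an edge and then exhibiting an induced four-cycle. You instead take the family of \emph{all} minimal separators relative to non-adjacent pairs, observe that their union is exactly the set of non-simplicial vertices, and verify everything by elementary arguments: component-chasing between $G-X_{i}$ and $G-X_{j}$ for the pairwise intersection (3), the non-neighbor trick for the empty intersection (4), and a true-twin reduction ($N[z]=N[z']$) plus an induced-$C_{4}$ contradiction for the adjacent case of (5); your component-chasing argument for (3) and the twin argument for (5) both check out. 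The two proofs share the same reduction (self-centered, non-complete, chordal if and only if $Diam(G)=2$ and $\Delta(G)\leq n-2$) and essentially the same sufficiency direction, where only $\Delta(G)\leq n-2$ and condition (5) are used. What the paper's approach buys is brevity given the already-developed stretched-set lemmas, together with a small explicit family of cliques indexed by $T$; what yours buys is self-containedness (only Dirac's clique-separator theorem, $2$-connectivity, and diameter two are needed) and the extra structural information that the cliques can be chosen so that their union is precisely the non-simplicial vertices. One small caution: throughout your argument, ``minimal separator'' must mean minimal relative to a fixed non-adjacent pair $a,b$ --- this is what Dirac's clique property, your step (4), and the claim that every non-simplicial vertex lies in some member of the family all require; under an inclusion-minimal reading of ``minimal separator'' those steps would need separate justification.
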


If $G$ is a chordal graph with $Diam(G)=3$ such that $\langle C(G) \rangle$ is self-centered with radius two, then the cliques given in Theorem~\ref{T: selfCentered} both separates and dominates $G$. We state this formally in the following theorem.

\begin{theorem}\label{T: diamG3} If $G$ is a chordal graph with $Diam(G)=3$ such that $\langle C(G) \rangle$ is self-centered with radius two, then $\Delta(G)\leq n-2$ and there exists a set of at least three pairwise intersecting cliques $\{X_{i},\ldots,X_{k}\}$ of $G$ such that $\bigcap_{i\in\{1,\ldots,k\}}X_{i}=\emptyset$, then every $X_{i}$ separates and dominates $G$.
\end{theorem}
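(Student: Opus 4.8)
The plan is to take the cliques straight from Theorem~\ref{T: selfCentered} and argue that they separate and dominate not merely the center but all of $G$. I would first collect the cheap facts. Applying Theorem~\ref{T: halfAtleast} with $k=3$ to $Diam(G)=3$ pins down $Rad(G)=2$, so every vertex has eccentricity $2$ or $3$; write $P=V(G)\setminus C(G)$ for the eccentricity-$3$ (peripheral) vertices. As $Rad(G)=2$ there is no universal vertex, whence $\Delta(G)\le n-2$ and $G$ is not complete. The center $H:=\langle C(G)\rangle$ is self-centered, chordal, of radius two, hence not complete, so Theorem~\ref{T: selfCentered} yields cliques $X_1,\dots,X_k$ of $H$ satisfying its conditions $(1)$--$(5)$; conditions $(3)$ and $(4)$ force $k\ge 3$, since one clique has nonempty intersection and two pairwise-intersecting cliques still intersect. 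Everything then reduces to promoting each $X_i$ from a separator and dominator of $H$ to one of $G$, that is, to controlling how the vertices of $P$ attach.

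The two chordality tools I would isolate are the following. First, a \emph{crossing} observation: if a vertex $v$ has central neighbours in two different components of $H-X_i$, then those two neighbours are non-adjacent central vertices, so by $Diam(H)=2$ they have a common neighbour $m\in C(G)$, and since $m$ is adjacent to vertices of two distinct components it must lie in $X_i$; chordality forbids the induced four-cycle through $v$ and $m$, so $v$ is adjacent to that vertex of $X_i$. Second, a \emph{two-apex} lemma: if $v$ and $m$ are adjacent, with $v$ joined to a vertex of one component and $m$ joined to a vertex of another, then closing a shortest induced path between those two vertices into a cycle through $v$ and $m$ and triangulating it forces the $X_i$-crossing vertex of that path to be adjacent to $v$ or to $m$.

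For domination I would fix $v\in P$ and split on $K:=N(v)\cap C(G)$. If $K$ meets $X_i$, we are done; if $K$ meets two distinct components of $H-X_i$, the crossing observation yields $v\sim X_i$. The remaining case, $K$ inside a single component and disjoint from $X_i$, is the hard one: here $v$ still lies within distance two of the other components, necessarily reaching them through a peripheral neighbour $m$, and the two-apex lemma applied to $v$ and $m$ forces adjacency to $X_i$ for $v$ or for $m$; converting this into $v\sim X_i$ is where the argument must work harder.

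Separation is the mirror-image difficulty: I must show $G-X_i$ is disconnected, and the danger is precisely that peripheral vertices, or chains of them, reconnect the components of $H-X_i$ that $X_i$ was meant to separate. The crossing and two-apex observations say such reconnections force adjacencies into $X_i$, but turning this into a genuine separation of $G$ requires knowing that each peripheral vertex attaches to the center inside a single component together with $X_i$, and never straddles. Thus the whole theorem rests on one structural question---how eccentricity-three vertices may hang off the center---and the main obstacle is to prove that $C(G)$ dominates $G$ and that every peripheral vertex's attachment is confined to one component plus $X_i$. For this I would exploit the full strength of the self-centered radius-two hypothesis via conditions $(2)$ and $(5)$ of Theorem~\ref{T: selfCentered}, namely that each $X_i$ totally dominates $H$ and that any two central vertices share a neighbour in $\bigcup_i X_i$, together with the $t$-stretched-set machinery developed earlier in the paper; pinning this peripheral attachment down precisely is where I expect essentially all of the work to lie.
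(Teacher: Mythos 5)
Your setup is right and matches the paper's: apply Theorem~\ref{T: selfCentered} to $\langle C(G)\rangle$ (self-centered of radius two, hence not complete), note that conditions (3) and (4) together force $k\geq 3$, and reduce everything to promoting each $X_i$ from a separator and dominator of the center to a separator and dominator of $G$. But the promotion itself is never carried out. For domination you concede that the case where $N(v)\cap C(G)$ lies in a single component of $\langle C(G)\rangle-X_i$ and misses $X_i$ is ``where the argument must work harder,'' and for separation you concede that ruling out reconnection through chains of peripheral vertices is ``where I expect essentially all of the work to lie.'' Those are precisely the points at issue, and no argument is supplied for them; as written the proposal is a reduction to two open claims, not a proof. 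Moreover, your crossing and two-apex observations only treat peripheral vertices that already have central neighbors in two components, and you have no argument that a peripheral vertex has any central neighbor at all (you flag proving that $C(G)$ dominates $G$ as a needed step, but never prove it).

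The missing idea is Lemma~\ref{L: shortPath}: every induced path in $G$ between two vertices of $C(G)$ lies entirely inside $C(G)$. This single fact makes all analysis of peripheral attachment unnecessary, in two steps whose order matters. First, separation: if some $X_i$ failed to separate $G$, there would be central vertices $z,z'$ separated by $X_i$ in $\langle C(G)\rangle$ but joined by a path in $G-X_i$; a shortest such path is induced in $G-X_i$, hence induced in $G$ (since $G-X_i$ is an induced subgraph of $G$), so by Lemma~\ref{L: shortPath} all of its vertices lie in $C(G)-X_i$, contradicting that $X_i$ separates $z$ from $z'$ in the center. Second, domination is then a consequence of separation rather than a parallel problem: fix central vertices $z,z'$ in distinct components of $G-X_i$; both have eccentricity two in $G$; any vertex $v\notin X_i$ lies in one component of $G-X_i$, so at least one of $z,z'$, say $z$, lies in a different component; then $d(v,z)\leq 2$ and every $v$--$z$ path meets $X_i$, so the middle vertex of a shortest $v$--$z$ path is a neighbor of $v$ inside $X_i$. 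Thus the theorem needs neither your two-apex lemma nor any control over how eccentricity-three vertices hang off the center --- only the short-path lemma and the realization that separation of $G$ must be established before, not alongside, domination.
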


Theorem~\ref{T: diamG3} completes our characterization of the centers of chordal graphs. In short we have shown that every graph that is the center of a chordal graph can be built from a set of dominating cliques.

We give proofs of Theorems~\ref{T: CIFF3}, \ref{T: selfCentered}, and \ref{T: diamG3} in Section~\ref{sec:chordal}. Using the concept of a $t$-stretched set, we are able to deduce with little work the bulk of the previously known results on the centers of chordal graphs. We will reference the original theorems and then give our proofs for completeness.

\section{$t$-stretched sets}\label{sec:tstretched}

For a set $S\subseteq V(G)$, we let $N(S,t)=\{y\in V(G)~|~d(y,S)=t\}$ and $N_{\leq}(S,t)=\{y\in V(G)~|~d(y,S)\leq t\}$. For $t=1$ we have $N(S)=N(S,1)$ and $N[S]=N_{\leq}(S,1)$. We heavily rely on the following concept for the rest of the paper.

\begin{definition}\label{defn:tStretched}Let $T$ be a diametrical set in a graph $G$. For some $t\leq D-1$, we say $T$ is \textbf{$t$-stretched} if for every $u\in T$ the following properties hold:
\begin{enumerate}[label=(C\arabic*),ref=(C\arabic*)]
    \item\label{tStretched_condition:1} $N(u,t)$ does not separate $T-u$,
    \item\label{tStretched_condition:2} Subject to \ref{tStretched_condition:1}, let $X_{u}$ denote a smallest set in $N(u,t)$ that separates $u$ from $T-u$. If $X_{u}$ separates some $w\in V(G)$ from $T-u$ such that $d(w,T-u)=D$ and $d(w,x)>t$ for some $x\in X_{u}$, then $d(w,X_{u})\leq t-1$. 
\end{enumerate}
\end{definition}

If $T$ is a $t$-stretched set, then we let $X_{u}$ denote a smallest set in $N(u,t)$ that separates $u$ from $T-u$, and we denote $W_{u}$ and $W_{T-u}$ to be the components containing $u$ and $T-u$, respectively, in $G-X_{u}$. A $t$-stretched set in $G$ is said to be {\bf maximal} if it  is not a subset of  any other $t$-stretched set of $G$.

To help visualize the definition of a $t$-stretched set, we construct the following example. Let  $a_{0}a_{1}\dots a_{t-1}$, $b_{0}b_{1}\dots b_{t-1}$, and $c_{0}c_{1}\ldots c_{t-1}$ be three induced paths and let $A$ and $B$ be cliques with at least two vertices each. We add an edge from every vertex in $A$ to every vertex in $B$ to form a $(|A|+|B|)$-clique. For some non-empty subset $B'$ of $B$ we add an edge from every vertex of $B'$ to $c_{t-1}$, and then add an edge from $a_{t-1}$ and $b_{t-1}$ to every vertex in $A$ and $B$, respectively. We can see that $\{a_{0}, b_{0}\}$ forms a diametrical pair but is not a $t$-stretched set. On the other hand, $\{a_{0}, c_{0}\}$ is a $t$-stretched set.

We show in the next lemma that we can build a $t$-stretched set from any diametrical set whose vertices satisfy \ref{tStretched_condition:1}. Thus, for every $t$ there exists a diametrical pair that is $t$-stretched.

\begin{lemma}\label{lem:stretchingT}Let $T$ be a diametrical set such that every vertex satisfies \ref{tStretched_condition:1}. If $U\subseteq T$ is all the vertices that satisfy \ref{tStretched_condition:2}, then there exists a $w\in V(G)-V(W_{T-v})$ for some $v\in T-U$ such that $T-\{v\}+\{w\}$ is a diametrical set whose vertices satisfy \ref{tStretched_condition:1} and every vertex in $U\cup \{w\}$ satisfies \ref{tStretched_condition:2}.

\begin{proof}Since $v\in T-U$, there must exist a $w\in V(G)-V(W_{T-v})$ and an $x'\in X_{v}$ such that $d(w,x')\geq t+1$ and $d(w,X_{v})\geq t$. Since $X_{v}-N(w,t)$ is not empty, the component of $V(G)-N(w,t)$ containing $T-v$ contains the vertices of $W_{T-v}$ and is larger. Thus, if we choose such a $w$ so that the component of $V(G)-N(w,t)$ containing $T-v$ is as large as possible, then $w$ satisfies \ref{tStretched_condition:1} and \ref{tStretched_condition:2} with respect to the diametrical set $T-\{v\}+\{w\}$. Since $N(w,t)\cap X_{u}=\emptyset$ for every $u\in U$, we have that every vertex in $U\cup \{w\}$ satisfies \ref{tStretched_condition:1} and \ref{tStretched_condition:2}.
\end{proof}
\end{lemma}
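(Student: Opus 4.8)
The plan is to produce $w$ as an extremal (largest-component) replacement for $v$ and to let optimality force the two stretching conditions. First I would exploit the failure of \ref{tStretched_condition:2} at $v$: since $v\in T-U$, negating \ref{tStretched_condition:2} yields a vertex $w\in V(G)-V(W_{T-v})$ that $X_{v}$ separates from $T-v$, with $d(w,T-v)=D$, with $d(w,x')>t$ for some $x'\in X_{v}$, and with $d(w,X_{v})\ge t$ (the failure of the conclusion $d(w,X_{v})\le t-1$). Because $D$ is the diameter and $d(w,T-v)=D$ is realized as a minimum, every $u\in T-v$ in fact has $d(w,u)=D$, so $T-\{v\}+\{w\}$ is again diametrical.

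The technical engine I would isolate first is a monotonicity statement: if a vertex $z$ is separated from $T-v$ by a minimal separator $Y$ with $d(z,Y)\ge t$ and some $y'\in Y$ satisfying $d(z,y')>t$, then the component $C(z)$ of $G-N(z,t)$ containing $T-v$ strictly contains the component $W$ of $G-Y$ containing $T-v$. The inclusion $W\subseteq C(z)$ holds because a shortest path from $z$ into $W$ must cross $Y$ at a vertex of distance $\ge t$ from $z$, so every vertex of $W$ has distance $\ge t+1$ from $z$ and survives in $G-N(z,t)$; strictness holds because minimality of $Y$ makes $y'$ adjacent to $W$ while $y'\notin N(z,t)$, so $y'$ joins the component of $T-v$ without lying in $W$. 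Applied with $z=w$ and $Y=X_{v}$, this shows $C(w)\supsetneq W_{T-v}$.

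Next I would make the extremal choice: among all vertices $z$ with $d(z,u)=D$ for every $u\in T-v$ and with $T-v$ contained in a single component $C(z)$ of $G-N(z,t)$, select $w$ maximizing $|C(z)|$; admissible candidates exist by the previous step. For such a maximizer, \ref{tStretched_condition:1} for $T-\{v\}+\{w\}$ is automatic, since $T-v$ lies in the one component $C(w)$. For \ref{tStretched_condition:2} I would argue by contradiction: a failure at $w$ produces, via the negation of \ref{tStretched_condition:2}, a vertex $w'$ that the smallest separator $X_{w}\subseteq N(w,t)$ separates from $T-v$ and that meets the hypotheses of the engine. Since $X_{w}\subseteq N(w,t)$, the component of $G-X_{w}$ containing $T-v$ contains $C(w)$, and the engine then yields a component $C(w')$ strictly larger than it; as $w'$ is itself an admissible candidate, $|C(w')|>|C(w)|$ contradicts maximality. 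Hence $w$ satisfies \ref{tStretched_condition:1} and \ref{tStretched_condition:2}.

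Finally I would check that the substitution disturbs none of the vertices $u\in U$. The relevant facts are that $d(w,u)=d(v,u)=D$ for every $u\in U\subseteq T-v$, so $w$ occupies the same eccentric position as $v$, and that $N(w,t)\cap X_{u}=\emptyset$: a common vertex $x$ would force $d(w,x)=d(u,x)=t$ and hence $D=d(w,u)\le 2t$ with $x$ on a $w$--$u$ geodesic. With the separators $X_{u}$ and the spheres $N(u,t)$ thus untouched by the exchange, the verifications of \ref{tStretched_condition:1} and \ref{tStretched_condition:2} for each $u\in U$ carry over unchanged. I expect the extremal step to be the crux: one must pin down exactly the monotone quantity (the size of the component of $G-N(z,t)$ carrying $T-v$), confirm that a hypothetical violator $w'$ of \ref{tStretched_condition:2} is an admissible candidate strictly increasing it, and use minimality of the separators to guarantee the strict growth; the disjointness $N(w,t)\cap X_{u}=\emptyset$ is the other point deserving care, since it is what lets the conditions for $U$ transfer verbatim.
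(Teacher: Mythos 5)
Your proposal is correct and follows essentially the same route as the paper's own proof: you extract $w$ from the failure of (C2) at $v$, observe that the component of $G-N(w,t)$ containing $T-v$ strictly contains $W_{T-v}$ (your "engine" is exactly the paper's second sentence, with the minimal-separator adjacency made explicit), choose $w$ to maximize that component so that any violator of (C2) would be an admissible candidate carrying a strictly larger component, and close by claiming $N(w,t)\cap X_{u}=\emptyset$ so that the conditions at the vertices of $U$ are undisturbed. If anything you supply details the paper omits (notably the explicit maximality contradiction establishing (C2) for $w$); the one wobble is your justification of the disjointness $N(w,t)\cap X_{u}=\emptyset$, since deducing $D\le 2t$ is not by itself absurd when $t\ge\lceil D/2\rceil$, but this is precisely the step the paper asserts with no justification at all, so your argument is faithful to--and no weaker than--the original.
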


We can also build $t$-stretched sets from smaller ones by just applying Lemma~\ref{lem:stretchingT} multiple times.

\begin{lemma}\label{lem:largerstretchT}Let $T$ be a diametrical set in a graph $G$. If there is a $w\in V(G)$ such that $d(w,T)=D$ and $N(w,t)$ does not separate $T-u$, then there exists a $t$-stretched $T'$ such that $T\subset T'$.
\begin{proof}Note that $T+w$ is diametrical, and every vertex satisfies \ref{tStretched_condition:1}. Furthermore, since every vertex in $T$ satisfies \ref{tStretched_condition:2} we have by Lemma~\ref{lem:stretchingT} that there exists a $t$-stretched set $T'$ such that $T\subset T'$. 
\end{proof}
\end{lemma}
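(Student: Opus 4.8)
The plan is to enlarge $T$ by the single vertex $w$, check that the resulting set $T+w$ meets the hypotheses of Lemma~\ref{lem:stretchingT}, and then let that lemma do the remaining work of turning it into a $t$-stretched set. I read the statement with $T$ already $t$-stretched (so every vertex of $T$ satisfies \ref{tStretched_condition:1} and \ref{tStretched_condition:2}) and with the hypothesis on $w$ supplying \ref{tStretched_condition:1} for $w$ inside $T+w$, namely that $N(w,t)$ does not separate $(T+w)-w=T$.

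First I would confirm that $T+w$ is diametrical. Because $D=Diam(G)$ is the largest distance realized in $G$, the equality $d(w,T)=D$ forces $d(w,s)=D$ for every $s\in T$; since all pairs inside $T$ are already at distance $D$, the set $T+w$ is diametrical, and $w\notin T$, so the inclusion $T\subset T'$ we are after will be strict.

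Next I would verify that every vertex of $T+w$ satisfies \ref{tStretched_condition:1}. For the new vertex $w$ this is precisely the hypothesis. For an old vertex $u\in T$ the requirement strengthens from ``$N(u,t)$ does not separate $T-u$'' to ``$N(u,t)$ does not separate $(T-u)+w$''. Since $T$ is $t$-stretched, $N(u,t)$ already confines $T-u$ to a single component, so it suffices to show that $w$ remains joined to $T-u$ after $N(u,t)$ is deleted. This is where condition \ref{tStretched_condition:2} for $u$ enters: as $d(w,T-u)=D$, the vertex $w$ is exactly of the type that \ref{tStretched_condition:2} regulates, and this condition — together with the hypothesis that $N(w,t)$ does not separate $T$ — should force $w$ onto the $T-u$ side $W_{T-u}$ rather than letting $X_u$ cut it off. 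Reconciling the separator $X_u$ attached to $u$ with the separating set $N(w,t)$ attached to $w$, so that $w$ provably lands in the component of $T-u$, is the step I expect to be the main obstacle, since it is the only place where the global distance-$D$ property of $w$ must be matched against the local separator structure recorded by $X_u$ and $W_{T-u}$.

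Once \ref{tStretched_condition:1} holds throughout $T+w$, I would finish by invoking Lemma~\ref{lem:stretchingT}. Every vertex of $T$ satisfies \ref{tStretched_condition:2}, so the set $U$ of vertices of $T+w$ satisfying \ref{tStretched_condition:2} contains $T$, the only possible exception being $w$. If $w$ also satisfies \ref{tStretched_condition:2}, then $T+w$ is already $t$-stretched and I take $T'=T+w$. Otherwise $w$ is the unique vertex of $(T+w)-U$, and a single application of Lemma~\ref{lem:stretchingT} (with $v=w$) replaces $w$ by some $w'$ to give $T'=(T+w)-w+w'=T+w'$, a diametrical set all of whose vertices satisfy \ref{tStretched_condition:1} and, since $U\cup\{w'\}=T+w'$, also \ref{tStretched_condition:2}; hence $T'$ is $t$-stretched. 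In either case $T\subset T'$, as required.
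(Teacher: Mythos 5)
Your proposal retraces the paper's own proof step for step: pass to $T+w$, note it is diametrical, claim \ref{tStretched_condition:1} for every vertex of $T+w$, and let Lemma~\ref{lem:stretchingT} finish (taking $T'=T+w$ if $w$ already satisfies \ref{tStretched_condition:2}, and otherwise applying Lemma~\ref{lem:stretchingT} once with $v=w$). Your reading of the hypotheses ($T$ itself $t$-stretched, and $N(w,t)$ not separating $T$) matches how the paper actually uses the lemma, and your endgame is a more careful rendering of what the paper compresses into its final sentence.

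However, the step you explicitly leave open --- that each old vertex $u\in T$ still satisfies \ref{tStretched_condition:1} in $T+w$, i.e.\ that $w$ lands in the same component of $G-N(u,t)$ as $T-u$ --- is a genuine gap, and it is exactly the step the paper waves through with ``Note that $T+w$ is diametrical, and every vertex satisfies (C1).'' When $2t\le D$ the step can in fact be closed: every vertex of $N(u,t)-X_{u}$ is joined to $u$ by a shortest path meeting $N(u,t)$ only at its endpoint, so if $N(u,t)$ separated $w$ from $T-u$ then $X_{u}$ would too; the $u$--$v$ path avoiding $N(w,t)$ has all its vertices at distance at least $t+1$ from $w$ and must cross $X_{u}$, so \ref{tStretched_condition:2} forces $d(w,X_{u})\le t-1$ and hence $d(w,u)\le 2t-1<D$, a contradiction. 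But when $D=2t-1$ (the case $t=\big\lceil\frac{D}{2}\big\rceil$ with $D$ odd, which the paper needs) that inequality is vacuous, and the claim can genuinely fail: take $V=\{u,a_{0},a_{1},x_{0},x_{1},y_{0},v,w\}$ with edges $ua_{0}$, $ua_{1}$, $a_{0}a_{1}$, $a_{0}x_{0}$, $a_{1}x_{1}$, $x_{0}y_{0}$, $x_{1}y_{0}$, $x_{1}v$, $y_{0}v$, $x_{0}w$. Here $D=3$, $t=2$, $T=\{u,v\}$ is $2$-stretched ($X_{u}=\{x_{0},x_{1}\}$, $X_{v}=\{x_{0},a_{1}\}$, and the only \ref{tStretched_condition:2}-witness is $w$, which is saved by $d(w,x_{0})=1$), $d(w,u)=d(w,v)=3$, and $N(w,2)=\{a_{0},y_{0}\}$ does not separate $\{u,v\}$ (the path $ua_{1}x_{1}v$ avoids it); yet $N(u,2)=\{x_{0},x_{1}\}$ separates $w$ from $v$, so $u$ violates \ref{tStretched_condition:1} in $T+w$ --- and since $w$ is the only vertex at distance $3$ from both $u$ and $v$, no $2$-stretched set properly contains $T$ at all. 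So your instinct located the real hole, but it is not one you (or the paper) can patch by rearranging the same ingredients; one needs either the restriction $2t\le D$, extra structure such as chordality, or a stronger hypothesis on $w$. A second, smaller point that both you and the paper pass over: \ref{tStretched_condition:2} is relative to the ambient set, so ``every vertex of $T$ satisfies \ref{tStretched_condition:2}'' must be re-verified for $T+w$ (both $X_{u}$ and the admissible witnesses change when $w$ is added) before Lemma~\ref{lem:stretchingT} may be invoked.
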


We have following simple observation.
\begin{lemma}\label{lem:basicObservation}For a connected $k$-chordal graph $G$, if $T$ is a diametrical set, then \[C(G)\subseteq \bigcap_{u\in T} N_{\leq}(u,R).\]
\begin{proof}If there exist a $z\in C(G)-N_{\leq}(u,R)$ for some $u\in T$, then we have the contradiction $d(u,z)>R$.
\end{proof}
\end{lemma}

The next lemma gives some basic properties of $t$-stretched sets for $k$-chordal graphs.
\begin{lemma}\label{L: BasicSDS}
Let $G$ be a connected $k$-chordal graph with $t\leq \big\lceil\frac{D}{2}\big\rceil$. If $T$ is a $t$-stretched diametrical set in $G$, then for any $u\in T$, $v\in T-u$, and $x\in X_{u}$ the following properties hold.
\begin{enumerate}[label=(\roman*),ref=(\roman*)]
\item \label{BasicSDS:cond1} $d(x,x')\leq \big\lfloor\frac{k}{2}\big\rfloor$ for every $x'\in X_{u}$.
\item \label{BasicSDS:cond2} $d(x,w)\leq t+\big\lfloor\frac{k}{2}\big\rfloor-1$ for all $w\in V(G)-V(W_{T-u})$.
\item \label{BasicSDS:cond3} If $t\leq \big\lfloor \frac{D}{2}\big \rfloor$, then  $X_{v}\cap V(W_{u})=\emptyset$.
\item \label{BasicSDS:cond4} If $t= \big\lfloor \frac{D}{2} \big\rfloor$, then $d(X_{u},X_{v})=D- 2 \big\lfloor \frac{D}{2} \big\rfloor$.
\item \label{BasicSDS:cond5} If $t=\big\lceil \frac{D}{2} \big\rceil$, then $\epsilon_{G}(x)\leq \big\lfloor \frac{D}{2} \big\rfloor+\big\lfloor\frac{k}{2}\big\rfloor$.
\end{enumerate}

\begin{proof} 
Proof of \ref{BasicSDS:cond1}. Let $x'\in X_{u}-x$. If $xx'$ is an edge, then we are done. We now suppose $xx$ is not an edge. Since $X_{u}$ is a smallest set that separates $u$ and $T-u$, there must exist shortest paths $P$ and $P'$ from $x$ to $x'$ whose internal vertices are in $W_{T-u}$ and $W_{u}$ respectively. Thus, $PP'$ is an induced cycle with length greater than three and at most $k$. This implies that either $l(P)$ or $l(P')$ has degree at most $\big\lfloor\frac{k}{2}\big\rfloor$. Thus, $d(x,x')\leq \big\lfloor\frac{k}{2}\big\rfloor$.

Proof of \ref{BasicSDS:cond2}. Since $T$ is diametrical, we have that $d(v,X_{u}) = D-t$. Thus, for any $w\in V(G)-V(W_{T-u})$ we have that $d(w,X_{u})\leq t$. If $d(w,X_{u})=t$, then $d(x,w)\leq t$. We are left with the case that there exists a path $P$ from $u$ to $x'\in X_{u}$ with length at most $t-1$. By \ref{BasicSDS:cond1} we know that $d(x,x')\leq \big\lfloor\frac{k}{2}\big\rfloor$. Thus, $d(w,x)\leq l(P)+d(x,x')\leq t-1+\big\lfloor\frac{k}{2}\big\rfloor$.

Proof of \ref{BasicSDS:cond3}. If there exists a $v\in T-u$ such that $X_{v}\cap V(W_{u})\neq \emptyset$, then we get a contradiction since $d(v, X_{u})\leq t-1$ implies that $d(u,v)=D< 2t\leq 2\big\lfloor \frac{D}{2} \big\rfloor\leq D$.

Proof of \ref{BasicSDS:cond4}. By the definition of a $t$-stretched set we have  \[D=d(v,u)=d(u,X_{u})+d(X_{u},X_{v})+d(X_{v},v)=2 \bigg\lfloor \frac{D}{2} \bigg\rfloor+d(X_{u},X_{v})\] for every pair $u$ and $v$ in $T$. Solving for $d(X_{u},X_{v})$, we show \ref{BasicSDS:cond4}. 

Proof of \ref{BasicSDS:cond5}. By \ref{BasicSDS:cond2}, for every $w\in V(G)-V(W_{T-u})$ we have that \[d(w,x)\leq \bigg\lceil \frac{D}{2} \bigg\rceil+\bigg\lfloor\frac{k}{2}\bigg\rfloor-1\leq \bigg\lfloor \frac{D}{2}\bigg\rfloor+ \bigg\lfloor\frac{k}{2}\bigg\rfloor.\] On the other hand, every vertex in $W_{T-u}$ is at most $\big\lfloor \frac{D}{2}\big\rfloor$ to some  $x'\in X_{u}$. By \ref{BasicSDS:cond1} we have that $d(x',x)\leq \big\lfloor\frac{k}{2}\big\rfloor$. Thus, \[d(w,x)\leq \bigg\lfloor \frac{D}{2}\bigg\rfloor+ \bigg\lfloor\frac{k}{2}\bigg\rfloor,\] and $\epsilon_{G}(x)\leq \big\lfloor \frac{D}{2}\big\rfloor+\big\lfloor\frac{k}{2}\big\rfloor$ for every vertex $x\in X_{u}$. 
\end{proof}
\end{lemma}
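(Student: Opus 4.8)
The plan is to prove the five properties in the stated order, since each later part feeds on the earlier ones, and the whole argument rests on two structural facts: that $X_{u}$ is a \emph{minimal} separator (so every vertex of $X_{u}$ has neighbours in both $W_{u}$ and $W_{T-u}$), and that, because $X_{u}\subseteq N(u,t)$ and $T$ is diametrical, one has the clean identity $d(X_{u},v)=D-t$ for every $v\in T-u$ (any $u$–$v$ path crosses $X_{u}$, and $d(u,X_{u})=t$).

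For \ref{BasicSDS:cond1} I would fix non-adjacent $x,x'\in X_{u}$ and use minimality of $X_{u}$ to produce a shortest $x$–$x'$ path $P$ whose interior lies in $W_{T-u}$ and a shortest $x$–$x'$ path $P'$ whose interior lies in $W_{u}$. Since the interiors lie in different components of $G-X_{u}$, the union $P\cup P'$ is an induced cycle of length between $4$ and $k$ by $k$-chordality; hence the shorter of its two arcs has length at most $\lfloor k/2\rfloor$, giving $d(x,x')\le\lfloor k/2\rfloor$. This is the only place where $k$-chordality enters, and I expect the delicate bookkeeping to be verifying that $P\cup P'$ is genuinely induced (the two shortest arcs meet only at $x,x'$ and carry no chords).

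For \ref{BasicSDS:cond2}, note that any $w\notin V(W_{T-u})$ is separated from $T-u$ by $X_{u}$, so from $d(X_{u},v)=D-t$ and $d(w,v)\ge d(w,X_{u})+(D-t)$ one gets $d(w,X_{u})\le t-(D-d(w,T-u))$. In particular $d(w,X_{u})\le t-1$ whenever $d(w,T-u)<D$, and then the nearest-point estimate $d(w,x)\le d(w,X_{u})+d(x_{0},x)\le(t-1)+\lfloor k/2\rfloor$ via \ref{BasicSDS:cond1} finishes this case. When $d(w,T-u)=D$, the vertex $w$ is exactly the one addressed by \ref{tStretched_condition:2}: either $d(w,x)\le t$ for all $x\in X_{u}$, or \ref{tStretched_condition:2} forces $d(w,X_{u})\le t-1$ and we again apply \ref{BasicSDS:cond1}. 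Shaving the final $-1$ in the boundary case is the subtle point, and it is precisely why condition \ref{tStretched_condition:2} is built into the definition.

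Parts \ref{BasicSDS:cond3}–\ref{BasicSDS:cond5} are then distance arithmetic. For \ref{BasicSDS:cond3}, suppose $x\in X_{v}\cap V(W_{u})$; the shortest $v$–$x$ path has length $t$ and must cross $X_{u}$ before its last vertex, so $d(v,X_{u})\le t-1$, whence $D=t+d(X_{u},v)\le 2t-1<D$ when $t\le\lfloor D/2\rfloor$, a contradiction. Part \ref{BasicSDS:cond4} follows because \ref{BasicSDS:cond3}, applied in both directions, makes $W_{u}$ and $W_{v}$ disjoint, so a shortest $u$–$v$ path decomposes as $d(u,X_{u})+d(X_{u},X_{v})+d(X_{v},v)=2\lfloor D/2\rfloor+d(X_{u},X_{v})$, and solving for $d(X_{u},X_{v})$ gives the claim. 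Finally, for \ref{BasicSDS:cond5} with $t=\lceil D/2\rceil$, I would handle the two sides separately: for $w\notin V(W_{T-u})$ apply \ref{BasicSDS:cond2} and use $\lceil D/2\rceil\le\lfloor D/2\rfloor+1$, while for $w\in V(W_{T-u})$ the separator/diameter argument gives $d(w,X_{u})\le D-t=\lfloor D/2\rfloor$, and \ref{BasicSDS:cond1} upgrades this to $d(w,x)\le\lfloor D/2\rfloor+\lfloor k/2\rfloor$ for every $x\in X_{u}$. The main obstacle throughout is establishing \ref{BasicSDS:cond1} cleanly and extracting the refined bound in \ref{BasicSDS:cond2}; everything else reduces to combining these with the identity $d(X_{u},v)=D-t$.
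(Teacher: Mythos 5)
Your proposal is correct and follows essentially the same route as the paper's proof: part \ref{BasicSDS:cond1} via the induced cycle formed by two shortest paths through $W_{u}$ and $W_{T-u}$ guaranteed by minimality of $X_{u}$, part \ref{BasicSDS:cond2} via the identity $d(X_{u},v)=D-t$ together with condition \ref{tStretched_condition:2}, and parts \ref{BasicSDS:cond3}--\ref{BasicSDS:cond5} by the same distance arithmetic. If anything, your write-up is slightly more explicit than the paper's (naming \ref{tStretched_condition:2} in the boundary case of \ref{BasicSDS:cond2} and citing \ref{BasicSDS:cond3} to justify the path decomposition in \ref{BasicSDS:cond4}), but the underlying argument is identical.
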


Using Lemma~\ref{L: BasicSDS} we can prove Theorem~\ref{T: halfAtleast} and Theorem~\ref{T: ISCenter}.

\begin{proof}[\textbf{Proof of Theorem~\ref{T: halfAtleast}}] Let $\{u,v\}$ be a $\big\lceil \frac{D}{2} \big \rceil$-stretched diametrical set. If $R < \big\lceil \frac{D}{2}\big\rceil$, then $C(G)\subseteq V(W_{u})$ for every $u\in T$. However, since \[D=\bigg\lceil \frac{D}{2}\bigg\rceil+\bigg\lfloor \frac{D}{2}\bigg\rfloor\] implies $d(v, X_{u})=\big\lfloor \frac{D}{2}\big\rfloor$, we have the contradiction \[d(z,v)=d(z,X_{u})+d(X_{u},v)\geq \bigg\lceil \frac{D}{2}\bigg\rceil\] since 
\[\bigg\lceil \frac{D}{2}\bigg\rceil-1\leq \bigg\lfloor \frac{D}{2}\bigg\rfloor\leq \bigg\lceil \frac{D}{2}\bigg\rceil.\] 

By Lemma~\ref{L: BasicSDS}~\ref{BasicSDS:cond5} we have that $\epsilon_{G}(x)\leq \big\lfloor \frac{D}{2} \big\rfloor+\big\lfloor\frac{k}{2}\big\rfloor$ for all $x\in X_{u}$. Since $R\leq \epsilon_{G}(x)$, we have that $\big\lfloor \frac{D}{2}\big\rfloor\geq R-\big\lfloor\frac{k}{2}\big\rfloor$.

Trivially we have that $R\leq D$, and $D\leq 2Rad(G)$ follows from \[R \geq \bigg\lceil \frac{D}{2}\bigg\rceil\geq \frac{D}{2}.\] Finally, we have that \[D= \bigg\lceil \frac{D}{2}\bigg\rceil+\bigg\lfloor \frac{D}{2}\bigg\rfloor\geq R-\bigg\lfloor\frac{k}{2}\bigg\rfloor+R-\bigg\lfloor\frac{k}{2}\bigg\rfloor=2R-2\bigg\lfloor\frac{k}{2}\bigg\rfloor.\qedhere\]
\end{proof}

\begin{proof}[\textbf{Proof of Theorem~\ref{T: ISCenter}}] Let $u$ be a vertex in a $\big\lceil \frac{D}{2} \big\rceil$-stretched set $T$ and $x\in X_{u}$. Suppose there exists a $z$ in $C(G)$ such that $d(z,X_{u})>\big\lfloor\frac{k}{2}\big\rfloor$. By \ref{BasicSDS:cond5} we have that $R\leq \epsilon(x)\leq \bigg\lfloor\frac{D}{2} \bigg\rfloor+\bigg\lfloor\frac{k}{2}\bigg\rfloor$. If $z\notin W_{u}$, then for some $x\in X_{u}$ we have the contradiction \[R\geq d(z,u)\geq d(z,X_{u})+d(X_{u},u)>\bigg\lfloor\frac{k}{2}\bigg\rfloor+\bigg\lceil\frac{D}{2} \bigg\rceil \geq \epsilon(x)\geq R.\] We now consider the case $z\in W_{u}$. Let $v\in T-u$. Note that $d(v,X_{u})=\big\lfloor\frac{D}{2} \big\rfloor$. Thus, we have the contradiction \[R\geq d(z,v)\geq d(z,X_{u})+d(X_{u},v)>\bigg\lfloor\frac{k}{2}\bigg\rfloor+\bigg\lfloor\frac{D}{2} \bigg\rfloor \geq \epsilon(x)\geq R.\] Thus, $C(G)\subseteq N_{\leq}(X_{u}, \big\lfloor\frac{k}{2}\big\rfloor)$.

Let $H=\langle N_{\leq}(X_{u}, \big\lfloor\frac{k}{2}\big\rfloor)\rangle$. For any two vertices $a$ and $b$ in $H$, there exist vertices $x$ and $x'$ in $X_{u}$ such that $d_{H}(a,x)\leq \big\lfloor\frac{k}{2}\big\rfloor$ and $d_{H}(x',b)\leq \big\lfloor\frac{k}{2}\big\rfloor$. By Lemma~\ref{L: BasicSDS}~\ref{BasicSDS:cond1} we know that every vertex on a shortest path between two vertices of $X_{u}$ is in $N_{\leq}(X_{u}, \big\lfloor\frac{k}{2}\big\rfloor)$. This implies \[d_{H}(a,b)\leq d_{H}(a,x)+d_{H}(x,x')+d_{H}(X',b)\leq 3\bigg\lfloor\frac{k}{2}\bigg\rfloor.\] Thus, $Diam(H)\leq 3\big\lfloor\frac{k}{2}\big\rfloor$. Furthermore, we have that \[Rad(H)\leq d_{H}(x',b)\leq d_{H}(x',x)+d_{H}(x,b)\leq 2\bigg\lfloor\frac{k}{2}\bigg\rfloor. \qedhere\]
\end{proof}

\section{Center of chordal graphs}\label{sec:chordal}

In this section we restrict ourselves to chordal graphs. In \cite{Dirac} it was shown that every minimal separator of a chordal graph is a clique. By Theorem~\ref{T: halfAtleast}, we have $R-1\leq \big\lfloor \frac{D}{2} \big\rfloor \leq  \big\lceil \frac{D}{2}\big\rceil\leq R$ for any chordal graph. The next two lemmas highlight some key relationships between $t$-stretched sets and the centers of chordal graphs.

\begin{lemma}\label{lem:separation} Let $G$ be a connected graph. If $T$ is a $\big\lfloor \frac{D}{2} \big\rfloor$-stretched set, then $X_{u}\cap V(W_{v})=\emptyset$ for every $u\in T$ and $v\in T-u$.
\begin{proof}By contradiction suppose $X_{u}\cap V(W_{v})\neq \emptyset$. Since $X_{u}$ and $X_{v}$ are minimal separations for $u$ and $v$, we may conclude that $X_{u}$ separates some $x\in X_{v}$ from $v$. This implies there is a shortest path $P$ from $v$ to $x$ such that there is a $y\in V(P)\cap (X_{u}-X_{v})$. Thus, $d(v,y)\leq \big\lfloor \frac{D}{2} \big\rfloor-1$, and we have the contradiction \[D=d(u,v)\leq d(u,y)+d(y,v)\leq 2\bigg\lfloor \frac{D}{2} \bigg\rfloor-1<D.\qedhere\]
\end{proof}
\end{lemma}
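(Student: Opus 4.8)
The plan is to argue by contradiction, so suppose $y\in X_{u}\cap V(W_{v})$ for some $u\in T$ and $v\in T-u$, and write $t=\lfloor D/2\rfloor$. First I would record the two distance facts that the $t$-stretched setup supplies immediately: since $X_{u}\subseteq N(u,t)$ we have $d(u,y)=t$, and since $X_{v}\subseteq N(v,t)$ every vertex of $X_{v}$ lies at distance exactly $t$ from $v$. Note also that $T$ is diametrical, so $d(u,v)=D$.

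The next step is to locate $u$ and $y$ relative to the separator $X_{v}$. Because $X_{v}$ separates $v$ from $T-v$ and $u\in T-v$, the vertex $u$ lies in the component $W_{T-v}$, while the assumed vertex $y$ lies in $W_{v}$; these are distinct components of $G-X_{v}$, so any path from $u$ to $y$ must meet $X_{v}$. In particular, fixing a shortest path $Q$ from $u$ to $y$, which has length $d(u,y)=t$, there is a vertex $z\in V(Q)\cap X_{v}$.

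The heart of the argument is a distance pincer on the crossing vertex $z$. Since $z$ lies on $Q$ we have $d(u,z)\leq t$. On the other hand, using $d(v,z)=t$ and the triangle inequality, $d(u,z)\geq d(u,v)-d(v,z)=D-t=\lceil D/2\rceil\geq\lfloor D/2\rfloor=t$. Hence $d(u,z)=t$, and because $Q$ has length exactly $t$ this forces $z$ to be the endpoint $y$. Thus $y=z\in X_{v}$, which contradicts $y\in V(W_{v})$ since $W_{v}$ is a component of $G-X_{v}$ and hence disjoint from $X_{v}$.

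I expect the one delicate point to be this pincer: it closes precisely because the choice $t=\lfloor D/2\rfloor$ makes the lower bound $D-t=\lceil D/2\rceil$ coincide with the upper bound $t$, collapsing the crossing vertex onto $y$; this is exactly where the hypothesis on $t$ is essential, and a larger $t$ would leave a gap. An alternative route reaches the same contradiction by instead showing that $X_{u}$ separates some $x\in X_{v}$ from $v$, so that a shortest path from $v$ to $x$ meets $X_{u}-X_{v}$ in a vertex $y'$ with $d(v,y')\leq t-1$, giving $D=d(u,v)\leq d(u,y')+d(y',v)\leq 2\lfloor D/2\rfloor-1<D$; the pincer version is preferable because it avoids having to justify that separation claim.
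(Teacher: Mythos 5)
Your proof is correct, and it takes a genuinely different (and more self-contained) route than the paper's. The alternative you sketch in your last sentence is in fact exactly the paper's proof: assuming $X_u\cap V(W_v)\neq\emptyset$, the paper asserts, citing minimality of $X_u$ and $X_v$, that $X_u$ separates some $x\in X_v$ from $v$; it then takes a shortest $v$--$x$ path (of length exactly $\lfloor D/2\rfloor$, since $x\in X_v\subseteq N(v,\lfloor D/2\rfloor)$), extracts a crossing vertex $y'\in X_u-X_v$ with $d(v,y')\le\lfloor D/2\rfloor-1$, and contradicts $d(u,v)=D$ via the triangle inequality. Your worry about that separation claim is well placed: it is the only point where minimality of the separators is used, the paper gives no argument for it, and filling it in takes real work (by minimality the offending vertex $y\in X_u\cap V(W_v)$ has a neighbor in $W_u$, hence a path to $u$ with all internal vertices in $W_u$; that path must cross $X_v$ at an internal vertex, so $X_v\cap V(W_u)\neq\emptyset$, and any such vertex is cut off from $v\in W_{T-u}$ by $X_u$). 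Your pincer avoids all of this: it needs only that $X_u\subseteq N(u,t)$ and $X_v\subseteq N(v,t)$ consist of vertices at exact distance $t$, that $u\in W_{T-v}$ while $y\in W_v$, and the triangle inequality, with both proofs resting on the same arithmetic $2\lfloor D/2\rfloor\le D$. One small refinement of your write-up: the chain $t\le\lceil D/2\rceil\le d(u,z)\le t$ forces $\lceil D/2\rceil=\lfloor D/2\rfloor$, so when $D$ is odd it is already contradictory and you are done at that point; the collapse of $z$ onto the endpoint $y$ is only needed when $D$ is even. Either way the argument closes, so this is a presentational point, not a gap.
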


\begin{lemma}\label{L: BasicC(G)1}If $G$ is a connected chordal graph, and $T$ is a maximal $\big\lfloor \frac{D}{2} \big\rfloor$-stretched set, then $\big\lfloor \frac{D}{2} \big\rfloor = R$ if and only if \[C(G)=\bigcap_{u\in T} X_{u}.\]
\begin{proof}Let \[\hat{X} = \bigcap_{u\in T} X_{u}.\] Suppose $C(G)=\hat{X}$. For $x\in \hat{X}$, $u\in T$, and $v\in T-u$ we have that \[D=d(u,v)\leq d(u,x)+d(x,v)=2\bigg\lfloor \frac{D}{2} \bigg\rfloor\leq \bigg\lceil \frac{D}{2}\bigg\rceil+\bigg\lfloor \frac{D}{2}\bigg\rfloor=D.\] Therefore, \[\bigg\lfloor \frac{D}{2} \bigg\rfloor=\bigg\lceil \frac{D}{2}\bigg\rceil\leq R.\]

For every $x\in \hat{X}$, there is a $w\in V(G)$ such that $d(x,w)=R$. By \ref{BasicSDS:cond1} we know that each $X_{u}$ is a clique. Therefore, if $d(w,X_{u})<\big\lfloor \frac{D}{2} \big\rfloor$ for some $u\in T$, then $R= \big\lfloor \frac{D}{2} \big\rfloor$. On the other hand, if $d(w,X_{u})\geq\big\lfloor \frac{D}{2} \big\rfloor$ for every $u\in T$, then $d(w,T)=D$ since for some $x\in X_{u}$ we have that \[D\geq d(w,u)=d(u,x)+d(x,w)\geq 2\bigg\lfloor \frac{D}{2} \bigg\rfloor =D.\] However, since $\hat{X}$ is not empty, we have that $N(w,\big\lfloor \frac{D}{2} \big\rfloor)$ does not separate $T$. This is a contradiction since Lemma~\ref{lem:stretchingT} implies that $T$ is not maximal. 

Now suppose $\big\lfloor \frac{D}{2} \big\rfloor = R$. Lemma~\ref{lem:separation} states that $X_{u}\cap V(W_{v})=\emptyset$ for all $u\in T$ and $v\in T-u$. Therefore, $V(W_{u})\cap V(W_{v})=\emptyset$. By \ref{lem:basicObservation} we have that $C(G)\subseteq \hat{X}$. Let $z\in \hat{X}$ and $w\in V(G)$. Since $z\in X_{u}$ for all $u\in T$, it must be the case that there is a $v\in T$ such that $w\notin W_{T-v}$. By \ref{BasicSDS:cond2} we have that \[d(w,z)\leq \bigg\lfloor \frac{D}{2} \bigg\rfloor=R.\] Thus, $C(G)=\hat{X}$.
\end{proof}
\end{lemma}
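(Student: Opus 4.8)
The statement is a biconditional, so the plan is to prove each implication using the structure of the maximal stretched set $T$. Throughout I would write $\hat{X}=\bigcap_{u\in T}X_{u}$ and use three facts from the machinery already in place: since $k=3$ forces $\lfloor k/2\rfloor=1$, each $X_{u}$ is a clique by Lemma~\ref{L: BasicSDS}~\ref{BasicSDS:cond1}; one has $d(x,w)\le\lfloor D/2\rfloor$ whenever $x\in X_{u}$ and $w\notin W_{T-u}$ by \ref{BasicSDS:cond2}; and the components $W_{u}$ are pairwise disjoint by Lemma~\ref{lem:separation} together with \ref{BasicSDS:cond3}. For the forward direction, assume $C(G)=\hat{X}$. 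Fixing $x\in\hat{X}$ (nonempty, as the center is), membership $x\in X_{u}\cap X_{v}$ gives $D=d(u,v)\le d(u,x)+d(x,v)=2\lfloor D/2\rfloor\le D$, so $D$ is even and $\lfloor D/2\rfloor=\lceil D/2\rceil\le R$ by Theorem~\ref{T: halfAtleast}. It then remains to show $R\le\lfloor D/2\rfloor$.

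To do this I would pick $w$ with $d(x,w)=\epsilon_{G}(x)=R$ and split on its location. If $w\notin W_{T-u}$ for some $u$, then \ref{BasicSDS:cond2} already gives $R\le\lfloor D/2\rfloor$; if instead $w\in W_{T-u}$ for every $u$ but $d(w,X_{u})\le\lfloor D/2\rfloor-1$ for some $u$, then since $x$ and a nearest $x'\in X_{u}$ lie in the clique $X_{u}$ one gets $R\le 1+d(w,X_{u})\le\lfloor D/2\rfloor$. The decisive case is $w\in W_{T-u}$ with $d(w,X_{u})\ge\lfloor D/2\rfloor$ for all $u$; then $d(w,u)=d(w,X_{u})+\lfloor D/2\rfloor=D$ for every $u$, so $d(w,T)=D$, while the clique bound forces $R\le\lfloor D/2\rfloor+1$. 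Here I would exclude $R=\lfloor D/2\rfloor+1$ by contradicting maximality of $T$. The point is that $N(w,\lfloor D/2\rfloor)$ cannot separate $T$: along any shortest path from $u\in T$ to $x$, a vertex $y$ at distance $i<\lfloor D/2\rfloor$ from $u$ satisfies $d(w,y)\ge d(w,u)-i=2\lfloor D/2\rfloor-i>\lfloor D/2\rfloor$, and the endpoint has $d(w,x)=\lfloor D/2\rfloor+1$, so the whole path stays outside $N(w,\lfloor D/2\rfloor)$ and joins $u$ to $x$ there. As this holds for every $u\in T$, Lemma~\ref{lem:largerstretchT} enlarges $T$ once $d(w,T)=D$ is in hand, the desired contradiction; hence $R=\lfloor D/2\rfloor$.

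For the converse, assume $\lfloor D/2\rfloor=R$, so $D$ is even by Theorem~\ref{T: halfAtleast}. To get $C(G)\subseteq\hat{X}$, take $z\in C(G)$: Lemma~\ref{lem:basicObservation} gives $d(z,u)\le\lfloor D/2\rfloor$ for each $u$, and $d(z,u)+d(z,v)\ge d(u,v)=2\lfloor D/2\rfloor$ forces $d(z,u)=\lfloor D/2\rfloor$ for all $u$; a short check that $z$ cannot lie in any component of $G-X_{u}$ (each alternative pushes some $d(z,u)$ or $d(z,v)$ above $\lfloor D/2\rfloor$) places $z\in X_{u}$ for every $u$. For $\hat{X}\subseteq C(G)$ I would run exactly the three-case analysis above with an arbitrary $z\in\hat{X}$ in place of $x$ and an arbitrary $w$: the first two cases yield $d(w,z)\le\lfloor D/2\rfloor$ directly, and the third again produces a vertex at distance $D$ from all of $T$ whose $N(w,\lfloor D/2\rfloor)$ fails to separate $T$, contradicting maximality. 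Thus every $z\in\hat{X}$ has $\epsilon_{G}(z)=R$, and $C(G)=\hat{X}$.

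The one genuinely non-formal step, shared by both directions, is the non-separation claim that drives the maximality contradiction; everything else is triangle-inequality bookkeeping resting on the clique property of the $X_{u}$. I expect this to be the crux, and the observation that makes it go through is that shortest paths from $u$ to the common point stay strictly outside $N(w,\lfloor D/2\rfloor)$ precisely because $d(w,u)=D$ is as large as the diameter allows.
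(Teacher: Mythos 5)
Your proof is correct and follows essentially the same route as the paper's: the triangle inequality through a vertex of $\hat{X}=\bigcap_{u\in T}X_{u}$ to show $D$ is even, a case analysis on $d(w,X_{u})$ exploiting that each $X_{u}$ is a clique, and a contradiction with the maximality of $T$ via Lemma~\ref{lem:largerstretchT} in the remaining case. The one point of divergence is the inclusion $\hat{X}\subseteq C(G)$: the paper disposes of it by asserting (without justification) that every $w\in V(G)$ lies outside $W_{T-v}$ for some $v\in T$ and then applying Lemma~\ref{L: BasicSDS}~\ref{BasicSDS:cond2}, whereas you rerun the three-case analysis and invoke maximality to exclude the bad case, which makes your treatment of this step more complete than the paper's.
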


In \cite{CGCG} it was shown with a lengthy proof that the center of a chordal graph has a dominating clique. With Lemma~\ref{L: BasicC(G)1} we can provide not only a shorter proof but a more general result.

\begin{lemma}\label{L: BasicC(G)2} If $G$ is a connected chordal graph and $T$ is a $\big\lceil \frac{D}{2} \big\rceil$-stretched set, then for any $u\in T$, the following properties hold.
\begin{enumerate}[label=(\roman*),ref=(\roman*)]
\item\label{BasicC(G)2_option1} If $\big\lfloor \frac{D}{2} \big\rfloor=Rad(G)$, then $C(G)\subseteq X_{u}$.
\item\label{BasicC(G)2_option2} If $\big\lfloor \frac{D}{2} \big\rfloor=\big\lceil \frac{D}{2} \big\rceil=Rad(G)-1$, then $X_{u}$ dominates $\langle C(G) \rangle$.
\item\label{BasicC(G)2_option3} If $\big\lfloor \frac{D}{2} \big\rfloor<\big\lceil \frac{D}{2} \big\rceil=Rad(G)$, then $X_{u}\subseteq C(G)$ and $X_{u}-X_{v}$ dominates $\langle C(G) \rangle$ for every $v\in T-u$.
\end{enumerate}
\begin{proof}To prove Lemma~\ref{L: BasicC(G)2}~\ref{BasicC(G)2_option1} we just need to recognize that $d(u,X_{v})=D-\big\lceil \frac{D}{2} \big\rceil=R$ for every $u$ and $v\in T-u$. Thus, $\big\lceil \frac{D}{2} \big\rceil=\big\lfloor \frac{D}{2} \big\rfloor$ and $C_{u}\subseteq X_{u}$ follows from Lemma~\ref{L: BasicC(G)1}.

To prove Lemma~\ref{L: BasicC(G)2}~\ref{BasicC(G)2_option2} and \ref{BasicC(G)2_option3}, we begin by choosing an $x\in X_{u}$. By Lemma~\ref{L: BasicSDS}~\ref{BasicSDS:cond5}, we have that $X_{u}\subseteq C(G)$ since \[R\leq \epsilon_{G}(x)\leq \bigg\lfloor \frac{D}{2} \bigg\rfloor+1=R.\] Furthermore, by Lemma~\ref{L: BasicSDS}~\ref{BasicSDS:cond1} we know that $X_{u}$ is complete in $G$. Thus, we need to investigate vertices in $C(G)-X_{u}$.

We now complete the proof of Lemma~\ref{L: BasicC(G)2}~\ref{BasicC(G)2_option2}. Let $z\in C(G)$. We have two cases to consider.  When $z\in N_{\leq}(u,R-1)$ then there is some $v\in T-u$ such that $d(z,X_{u})=1$ since \[R=d(z,v)\geq d(z,X_{u})+d(X_{u},v)\geq d(z,X_{u})+ \bigg\lfloor \frac{D}{2} \bigg\rfloor= d(z,X_{u})+ R-1.\] When $z\notin N_{\leq}(u,R)$, then $d(z,X_{u})=1$ since \[R=d(z,u)\geq d(z,X_{u})+d(X_{u},u)\geq d(z,X_{u})+ \bigg\lfloor \frac{D}{2} \bigg\rfloor= d(z,X_{u})+ R-1.\] Thus, $X_{u}$ dominates $\langle C(G) \rangle$. 

The proof of Lemma~\ref{L: BasicC(G)2}~\ref{BasicC(G)2_option3} follows a similar argument. Let $X'_{u}= X_{u}-X_{v}$ for some $v\in T-u$. By Lemma~\ref{lem:basicObservation} we know that $C-X_{u}\subseteq N_{\leq}(u,R)$. Let $v\in T-u$ and choose a $z\in C-X_{u}$. Since $d(v, X_{u}\cap X_{v})=R$ and $d(v,X_{u})=R-1$, we have that $d(v, X'_{u})=R-1$. Since $d(z,v)\leq R$ and $X_{u}$ separates $z$ from $v$, it must be the case that $d(z,X'_{u})=1$. Thus, $X_{u}'$ dominates $\langle C(G) \rangle$.
\end{proof}
\end{lemma}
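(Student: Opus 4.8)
The plan is to handle the three parts separately, first noting that since $G$ is chordal we have $\lfloor k/2\rfloor=1$, so Theorem~\ref{T: halfAtleast} gives $R-1\le\lfloor D/2\rfloor\le\lceil D/2\rceil\le R$; hence the three hypotheses are exhaustive and correspond respectively to $D=2R$, $D=2R-2$, and $D=2R-1$. Two facts recur. By Lemma~\ref{L: BasicSDS}~\ref{BasicSDS:cond1} each $X_u$ is a clique (distinct vertices of $X_u$ are at distance at most $\lfloor k/2\rfloor=1$). And because $T$ is diametrical with $X_u\subseteq N(u,\lceil D/2\rceil)$ separating $u$ from $T-u$, we have $d(u,X_u)=\lceil D/2\rceil$ by definition, while the separator inequality $d(a,b)\ge d(a,X_u)+d(X_u,b)$ (valid when $X_u$ separates $a$ from $b$), combined with $d(u,v)=D$, pins down $d(v,X_u)=\lfloor D/2\rfloor$ for every $v\in T-u$.

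For part~\ref{BasicC(G)2_option1} I would argue directly that every geodesic midpoint lands in $X_u$. Here $D=2R$ and $\lceil D/2\rceil=\lfloor D/2\rfloor=R$. Given $z\in C(G)$ and any $v\in T-u$, Lemma~\ref{lem:basicObservation} gives $d(z,u)\le R$ and $d(z,v)\le R$, while $d(u,v)=2R$ forces $d(z,u)=d(z,v)=R$ by the triangle inequality. Concatenating shortest $u$--$z$ and $z$--$v$ paths yields a $u$--$v$ geodesic of length $2R$ passing through $z$ at distance $R$ from $u$; since $X_u$ separates $u$ from $v$, this geodesic crosses $X_u$ at a vertex at distance exactly $R$ from $u$, which must be $z$. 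Thus $C(G)\subseteq X_u$. (Alternatively one can reduce to Lemma~\ref{L: BasicC(G)1} after observing that $T$ is then $\lfloor D/2\rfloor$-stretched.)

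For parts~\ref{BasicC(G)2_option2} and \ref{BasicC(G)2_option3} the common opening step is $X_u\subseteq C(G)$: in both cases $\lfloor D/2\rfloor=R-1$, so Lemma~\ref{L: BasicSDS}~\ref{BasicSDS:cond5} gives $\epsilon_G(x)\le\lfloor D/2\rfloor+1=R$ for $x\in X_u$, and $\epsilon_G(x)\ge R$ always, so $X_u\subseteq C(G)$. For the domination claim in part~\ref{BasicC(G)2_option2} ($D=2R-2$, $\lceil D/2\rceil=R-1$), I would take $z\in C(G)$ and split on which side of $X_u$ it lies. If $X_u$ separates $z$ from some $v\in T-u$, then $R\ge d(z,v)\ge d(z,X_u)+d(X_u,v)=d(z,X_u)+(R-1)$ gives $d(z,X_u)\le1$; if instead $X_u$ separates $z$ from $u$, the same computation with $d(X_u,u)=R-1$ gives $d(z,X_u)\le1$. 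Hence $X_u$ dominates $\langle C(G)\rangle$.

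I expect the real work to be in part~\ref{BasicC(G)2_option3} ($D=2R-1$, $\lceil D/2\rceil=R$), where I must dominate by the strictly smaller clique $X'_u=X_u-X_v$, so a neighbor anywhere in $X_u$ no longer suffices. The decisive asymmetry is that $X_v\subseteq N(v,R)$ forces $d(v,X_u\cap X_v)=R$, while $d(v,X_u)=\lfloor D/2\rfloor=R-1$; this both shows $X'_u\ne\emptyset$ and locates the part of $X_u$ nearest $v$ inside $X'_u$. For $z\in C(G)-X_u$ I would first show $z\in W_u$: if $z$ lay in any other component of $G-X_u$, then $X_u$ would separate $z$ from $u$ and the separator inequality $R\ge d(z,u)\ge d(z,X_u)+R$ would force $z\in X_u$, a contradiction. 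With $z\in W_u$, a shortest $z$--$v$ path crosses $X_u$ at some $x$, and $d(z,v)=d(z,x)+d(x,v)$ with $d(x,v)\ge R-1$ yields $d(z,x)\le1$; moreover $x\notin X_u\cap X_v$, since otherwise $d(x,v)=R$ would force $z=x\in X_u$. Thus $d(z,X'_u)=1$, while the vertices of $X_u$ are dominated because $X_u$ is a clique and $X'_u\ne\emptyset$. The main obstacle, then, is exactly this bookkeeping: ensuring the dominating neighbor avoids $X_u\cap X_v$, which rests on the distance gap between $X'_u$ and $X_u\cap X_v$ as seen from $v$.
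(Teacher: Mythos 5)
Your proposal is correct, and for parts~\ref{BasicC(G)2_option2} and \ref{BasicC(G)2_option3} it is essentially the paper's own proof: the same appeal to Lemma~\ref{L: BasicSDS}~\ref{BasicSDS:cond5} to get $X_{u}\subseteq C(G)$, the same separator inequalities for domination in \ref{BasicC(G)2_option2} (your split by which side of $X_{u}$ the vertex $z$ lies on is in fact cleaner than the paper's split on $d(z,u)$, which contains an apparent typo in its second case), and the same asymmetry $d(v,X_{u}\cap X_{v})=R$ versus $d(v,X_{u})=R-1$ driving \ref{BasicC(G)2_option3}. The genuine difference is in part~\ref{BasicC(G)2_option1}: the paper disposes of it in one line by citing Lemma~\ref{L: BasicC(G)1}, whereas you prove it directly by concatenating shortest $u$--$z$ and $z$--$v$ paths into a $u$--$v$ geodesic of length $2R$ and observing that its crossing point with $X_{u}$ lies at distance exactly $R=\big\lceil \frac{D}{2}\big\rceil$ from $u$, hence equals $z$. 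This direct argument is self-contained and sidesteps the maximality hypothesis of Lemma~\ref{L: BasicC(G)1}, which the paper invokes for a stretched set that is not assumed maximal; your route does not have to address that point at all. Your treatment of \ref{BasicC(G)2_option3} also makes explicit several steps the paper leaves implicit: that $X_{u}'=X_{u}-X_{v}$ is nonempty, that any $z\in C(G)-X_{u}$ must lie in $W_{u}$ (so that $X_{u}$ genuinely separates $z$ from $v$), that the crossing vertex of a shortest $z$--$v$ path cannot lie in $X_{u}\cap X_{v}$, and that the vertices of $X_{u}\cap X_{v}$ themselves are dominated because $X_{u}$ is a clique. These are bookkeeping details rather than new ideas, but they fill in exactly the spots where the paper's proof is tersest.
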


Although, the next corollary was proved in \cite{CCG2} we can provide a short proof.
\begin{corollary}\cite{CCG2}\label{C: TR} $C(G)$ is a clique for any connected chordal graph $G$ with $Diam(G)=2Rad(G)$.
\begin{proof}By Lemma~\ref{lem:largerstretchT} there exists a maximal $\frac{D}{2}$-stretched set $T$ in $G$. Since $\frac{D}{2}=R$, we have by Lemma~\ref{L: BasicC(G)1} that $C(G)\subseteq X_{u}$ for every $u\in T$. Since $X_{u}$ is a minimal separation, we know that $C(G)$ is complete.
\end{proof}
\end{corollary}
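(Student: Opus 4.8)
The plan is to reduce everything to Lemma~\ref{L: BasicC(G)1} together with the fact that minimal separators of chordal graphs are cliques. Since $Diam(G)=2Rad(G)$ forces $D$ to be even, we have $\bigl\lfloor \frac{D}{2}\bigr\rfloor=\bigl\lceil \frac{D}{2}\bigr\rceil=\frac{D}{2}=R$. This is exactly the regime in which Lemma~\ref{L: BasicC(G)1} applies and gives a clean description of the center as an intersection of separators.

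First I would produce a maximal $\frac{D}{2}$-stretched diametrical set $T$. Starting from any diametrical pair, the existence of a $\frac{D}{2}$-stretched set follows from the stretching machinery of Lemma~\ref{lem:stretchingT} and Lemma~\ref{lem:largerstretchT}; enlarging it until it can no longer be extended yields a maximal such $T$. With $T$ in hand, the hypothesis $\frac{D}{2}=R$ lets me invoke Lemma~\ref{L: BasicC(G)1}, which states that $\bigl\lfloor \frac{D}{2}\bigr\rfloor=R$ is equivalent to $C(G)=\bigcap_{u\in T}X_{u}$. In particular $C(G)\subseteq X_{u}$ for each $u\in T$.

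It then remains to observe that each $X_{u}$ is a clique. By construction $X_{u}$ is a smallest, hence minimal, separator, and by the result of \cite{Dirac} every minimal separator of a chordal graph is a clique; alternatively this is immediate from Lemma~\ref{L: BasicSDS}~\ref{BasicSDS:cond1}, since for chordal graphs $k=3$ gives $\bigl\lfloor\frac{k}{2}\bigr\rfloor=1$, so any two vertices of $X_{u}$ are at distance at most one. A subset of a clique is a clique, so $C(G)$ is complete.

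The only genuinely load-bearing step is the appeal to Lemma~\ref{L: BasicC(G)1}, and within its hypotheses the crucial requirement is that $T$ be \emph{maximal}: maximality is what rules out a vertex $w$ with $d(w,T)=D$ whose ball $N(w,\frac{D}{2})$ fails to separate $T$, and hence is what forces the center to lie inside every $X_{u}$ rather than merely being dominated by it. Thus the hard part is not any new computation but correctly arranging the maximal $\frac{D}{2}$-stretched set so that Lemma~\ref{L: BasicC(G)1} is applicable; everything else is a one-line consequence of chordality.
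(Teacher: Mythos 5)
Your proposal is correct and follows essentially the same route as the paper: obtain a maximal $\frac{D}{2}$-stretched set via the stretching lemmas, invoke Lemma~\ref{L: BasicC(G)1} to get $C(G)\subseteq X_{u}$, and conclude since a minimal separator of a chordal graph is a clique. Your added justifications (evenness of $D$, the alternative appeal to Lemma~\ref{L: BasicSDS}~\ref{BasicSDS:cond1} with $k=3$, and the emphasis on why maximality of $T$ is needed) are accurate elaborations of the paper's terser argument rather than a different proof.
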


The following lemma was proved in \cite{CCG2} but is an easy corollary of a lemma given in \cite{CGCG}.

\begin{lemma}~\cite{CCG2, CGCG}\label{L: shortPath} Let $G$ be a connected chordal graph. If $P$ is an induced path in $G$ between two vertices of $C(G)$, then $V(P)\subseteq C(G)$.
\end{lemma}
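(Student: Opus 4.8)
The plan is to show that every internal vertex of $P$ has eccentricity at most $R$; since every vertex has eccentricity at least $R$, this forces each internal vertex into $C(G)$. Writing $P$ as $a=p_{0}p_{1}\cdots p_{m}=b$ with $a,b\in C(G)$, I would reduce the lemma to the following claim about chordal graphs: for every $w\in V(G)$ and every induced path $p_{0}p_{1}\cdots p_{m}$, the quantity $d(w,p_{i})$ is maximized over $i$ at one of the endpoints, that is, $\max_{i}d(w,p_{i})=\max\{d(w,p_{0}),d(w,p_{m})\}$. Granting this claim, any internal vertex $z=p_{j}$ satisfies $d(w,z)\leq\max\{d(w,a),d(w,b)\}\leq\max\{\epsilon_{G}(a),\epsilon_{G}(b)\}=R$ for every $w$, hence $\epsilon_{G}(z)\leq R$ and $z\in C(G)$, giving $V(P)\subseteq C(G)$.

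To prove the claim I would argue by contradiction, assuming the maximum value $M$ of $d(w,\cdot)$ along $P$ is attained only at interior indices. Walking in from the left end, let $i$ be the least index with $d(w,p_{i})=M$, and let $p_{i}\cdots p_{k}$ be the maximal run of consecutive vertices all at distance $M$ from $w$; by assumption $1\leq i\leq k\leq m-1$, and since distances change by at most one along the edges of $P$ we have $d(w,p_{i-1})=d(w,p_{k+1})=M-1$. I would then take a shortest path $A$ from $w$ to $p_{i-1}$ and a shortest path $B$ from $w$ to $p_{k+1}$, let $u$ be their last common vertex, and form the closed walk running from $u$ to $p_{i-1}$ along $A$, across the segment $p_{i-1}p_{i}\cdots p_{k}p_{k+1}$ along $P$, and back from $p_{k+1}$ to $u$ along $B$. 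The point of the peak hypothesis is that each $p_{\ell}$ with $i\leq\ell\leq k$ lies at distance $M$ from $w$, so it has no neighbor among the internal vertices of $A$ or $B$ (such a neighbor would sit at distance at most $M-2$ along a shortest path and force $d(w,p_{\ell})\leq M-1$); together with the non-adjacencies $p_{\ell-1}p_{\ell+1}\notin E(G)$ coming from $P$ being induced, this shows the only cycle-edges at each peak vertex are its two edges of $P$.

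Since $G$ is chordal, the resulting cycle, which has length at least four, must admit a chord, and repeatedly triangulating it produces a triangle resting on two consecutive edges $p_{\ell-1}p_{\ell}$ and $p_{\ell}p_{\ell+1}$ of the chordless segment; such a triangle forces an edge between two vertices of $P$ at path-distance two, contradicting that $P$ is induced. I expect the main obstacle to be precisely this last step: one must verify that no chord can be incident to an interior peak vertex, so that every triangulation is driven onto the forbidden segment, and one must discharge the technical degeneracies uniformly, namely the plateau case $k>i$, the possibility that $A$ and $B$ meet beyond $w$ (handled by the choice of the last common vertex $u$), and the possibility that the arcs of $A$ or $B$ meet the path segment, which requires extracting a genuine induced cycle of length at least four before invoking chordality.
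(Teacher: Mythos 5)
Your proposal is correct, but there is nothing in the paper to compare it against: the paper never proves Lemma~\ref{L: shortPath}, it simply quotes it from \cite{CCG2, CGCG}. Your argument is therefore a genuinely independent, self-contained route, and it works. Its core is the reduction to a peaklessness claim --- in a chordal graph, $d(w,\cdot)$ restricted to any induced path attains its maximum at an endpoint --- which is the same as saying every ball $N_{\leq}(w,r)$ contains every induced path between its members; since $C(G)=\bigcap_{w\in V(G)}N_{\leq}(w,R)$, the lemma (and, in passing, the connectivity of $\langle C(G)\rangle$) follows at once. Your proof of the claim also goes through, and the obstacles you flag all dissolve by the distance bookkeeping you already set up: every vertex of $A$ or $B$ other than $p_{i-1}$ and $p_{k+1}$ lies at distance at most $M-2$ from $w$, so neither path can touch the plateau $p_{i}\cdots p_{k}$ or contain the other path's endpoint, and no chord of the cycle can be incident to a plateau vertex (a chord into $A$ or $B$ would force $d(w,p_{\ell})\leq M-1$, a chord into the segment contradicts $P$ being induced). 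Once the cycle is triangulated --- chordality forbids chordless cycles of length at least four, so induction triangulates any cycle; this is exactly the fact the paper itself invokes in the proof of Lemma~\ref{L: domSep} --- the triangle on the edge $p_{i-1}p_{i}$ must have its third vertex adjacent to the plateau vertex $p_{i}$, hence equal to $p_{i+1}$, forcing the forbidden edge $p_{i-1}p_{i+1}$. The only point to tighten is the phrase ``last common vertex of $A$ and $B$'': two shortest paths from $w$ need not intersect in a common prefix, so define $u$ instead as a vertex of $V(A)\cap V(B)$ maximizing $d(w,u)$; since distance from $w$ increases strictly along a shortest path, the subpaths of $A$ and $B$ beyond $u$ share only $u$, and your closed walk is an honest cycle of length at least four. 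What your approach buys beyond the citation is a reusable, quantitatively stronger statement (induced-path closedness of all balls), proved with the same elementary toolkit the paper already uses elsewhere.
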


This implies that $\langle C(G) \rangle$ is connected for every chordal graph $G$.

\begin{lemma}\label{L:TRMO}If $G$ is a connected chordal graph with $Diam(G)=2Rad(G)-1$, then $C(G)$ has two disjoint cliques that each dominates $C(G)$.
\begin{proof} Since $D=2R-1$, we have that $D$ is odd and \[\bigg\lceil \frac{D}{2} \bigg\rceil=\frac{D+1}{2}=R.\] Let $T=\{u,v\}$ be a $R$-stretched pair. By Lemma~\ref{L: BasicC(G)2}~\ref{BasicC(G)2_option3}, both $X_{u}-X_{v}$ and $X_{v}-X_{u}$ dominates $\langle C(G) \rangle$. Since both $X'_{u}$ and $X'_{v}$ are complete in $G$, we have found our desired disjoint cliques.
\end{proof}
\end{lemma}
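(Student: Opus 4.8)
The plan is to reduce the whole statement to the third case of Lemma~\ref{L: BasicC(G)2}, which was engineered precisely for odd diameter. First I would record the arithmetic. From $Diam(G)=2Rad(G)-1$ the diameter $D$ is odd, so $\lceil D/2\rceil=(D+1)/2=R$ while $\lfloor D/2\rfloor=(D-1)/2=R-1$. In particular $\lfloor D/2\rfloor<\lceil D/2\rceil=Rad(G)$, which is exactly the hypothesis of Lemma~\ref{L: BasicC(G)2}~\ref{BasicC(G)2_option3}. To invoke it I need a $\lceil D/2\rceil$-stretched diametrical pair $T=\{u,v\}$. Such a pair exists for free: when $T=\{u,v\}$ the set $T-u=\{v\}$ is a single vertex and cannot be separated, so condition \ref{tStretched_condition:1} holds vacuously at both endpoints of any diametrical pair, and feeding this into Lemma~\ref{lem:stretchingT} (equivalently Lemma~\ref{lem:largerstretchT}) produces an $R$-stretched pair $T=\{u,v\}$, as already noted in the discussion following Definition~\ref{defn:tStretched}.

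Next I would apply Lemma~\ref{L: BasicC(G)2}~\ref{BasicC(G)2_option3} to each endpoint in turn. Applied at $u$ with the unique $v\in T-u$ it gives $X_{u}\subseteq C(G)$ and that $X_{u}-X_{v}$ dominates $\langle C(G)\rangle$; applied at $v$ with the unique $u\in T-v$ it gives $X_{v}\subseteq C(G)$ and that $X_{v}-X_{u}$ dominates $\langle C(G)\rangle$. The two sets $X_{u}-X_{v}$ and $X_{v}-X_{u}$ are disjoint by construction and both lie inside $C(G)$. Each is a clique: since $G$ is chordal ($k=3$), Lemma~\ref{L: BasicSDS}~\ref{BasicSDS:cond1} forces $d(x,x')\leq\lfloor 3/2\rfloor=1$ for all $x,x'$ in a common separator, so $X_{u}$ is complete and hence so is its subset $X_{u}-X_{v}$, and likewise for $X_{v}-X_{u}$. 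This yields the two disjoint dominating cliques demanded by the statement.

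The one point that genuinely needs care, and which I expect to be the only real obstacle, is the nonemptiness of $X_{u}-X_{v}$ (and symmetrically of $X_{v}-X_{u}$): a domination claim about the empty set would be vacuous and the lemma would say nothing. This is exactly where oddness of $D$ is used. Every vertex of $X_{u}\subseteq N(u,\lceil D/2\rceil)$ lies at distance $\lceil D/2\rceil=R$ from $u$, so since $X_{u}$ separates $u$ from $v$ we get $d(v,X_{u})=D-R=R-1$; on the other hand every vertex of $X_{v}$, and in particular every vertex of $X_{u}\cap X_{v}$, lies at distance $R$ from $v$. Hence a vertex of $X_{u}$ realizing $d(v,X_{u})=R-1$ cannot belong to $X_{v}$, so it witnesses $X_{u}-X_{v}\neq\emptyset$; the symmetric computation gives $X_{v}-X_{u}\neq\emptyset$. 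This is precisely the relation $d(v,X_{u}\cap X_{v})=R>R-1=d(v,X_{u})$ that is already embedded in the proof of Lemma~\ref{L: BasicC(G)2}~\ref{BasicC(G)2_option3}, so once the stretched pair is in hand the argument is short.
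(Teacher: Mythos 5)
Your proof is correct and follows essentially the same route as the paper's: both reduce the statement to Lemma~\ref{L: BasicC(G)2}~\ref{BasicC(G)2_option3} applied to an $R$-stretched diametrical pair $\{u,v\}$, take the disjoint sets $X_{u}-X_{v}$ and $X_{v}-X_{u}$, and get cliqueness from the completeness of the separators. The only difference is that you make explicit two points the paper leaves implicit --- the existence of the $R$-stretched pair (via Lemma~\ref{lem:stretchingT}) and the nonemptiness of $X_{u}-X_{v}$ and $X_{v}-X_{u}$ via the distance computation $d(v,X_{u})=R-1<R=d(v,X_{u}\cap X_{v})$ --- both of which you justify correctly.
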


\begin{lemma}\label{L: domSep}For $t\leq \lceil \frac{D}{2} \rceil$, if $T$ is a $t$-stretched set, then for any $W_{u}$ there exists a $w_{u}\in V(W_{u})$ such that $X_{u}\subseteq N(w_{u})$.
\begin{proof} Let $w_{u}$ be a vertex in $V(W_{u})$ such that $|N(w)\cap X_{u}|$ is maximum. By contradiction we assume there is an  $x\in X_{u}-N(w_{u})$. Since $X_{u}$ is a minimal separation of $u$ and $T-u$ and $W_{u}$ is a component of $G-X_{u}$, there is an induced path $P$ in $W_{u}$ from $w_{u}$ to some $z\in N(x)$. Let $x'\in N(w_{u})\cap X_{u}$. Since $X_{u}$ is a clique, we have that $w_{u}Pxx'$ is a cycle.   
Since $G$ is chordal, the end vertices of an edge of a cycle have at least one common neighbor on the cycle. Let $z'$ be the common neighbor of the edge $xz$. Since $P$ is an induced path and $w_{u}x'$ is an edge, it must be the case that $z'=x'$. Since $x'$ was arbitrarily chosen, we have that $(N(w_{u})\cap X_{u})\cup \{x\} \subset N(z)$. However, this implies the contradiction $|N(w_{u})\cap X_{u}|<|N(z)\cap X_{u}|$.
\end{proof}
\end{lemma}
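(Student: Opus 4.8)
The plan is to choose $w_u$ greedily and then derive a contradiction from a cycle argument that exploits chordality. Concretely, I would let $w_u$ be a vertex of $V(W_u)$ for which $|N(w_u)\cap X_u|$ is as large as possible, and suppose for contradiction that $X_u\not\subseteq N(w_u)$, so that some $x\in X_u$ is not adjacent to $w_u$. The aim is to exhibit a single vertex of $W_u$ whose neighborhood inside $X_u$ strictly contains $N(w_u)\cap X_u$, contradicting the extremal choice of $w_u$.

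Before the main step I would record two facts. First, since $t\leq\lceil\frac{D}{2}\rceil$ and $G$ is chordal, Lemma~\ref{L: BasicSDS}~\ref{BasicSDS:cond1} (with $k=3$, so $\lfloor\frac{k}{2}\rfloor=1$) shows that $X_u$ is a clique, hence any two of its vertices are adjacent. Second, because $X_u$ is a \emph{smallest} set in $N(u,t)$ separating $u$ from $T-u$, it is a minimal such separator: if some $x\in X_u$ had no neighbor in $W_u$, then $X_u-x$ would still separate $u$ from $T-u$, contradicting minimality. Thus the chosen $x$ has a neighbor inside the connected component $W_u$.

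Now I would build the cycle. Take $P$ to be a shortest path in $W_u$ from $w_u$ to the set $N(x)\cap V(W_u)$, and let $z$ be its endpoint in that set; such a path exists by the previous paragraph and the connectedness of $W_u$. Being shortest, $P$ is induced, and by the choice of its target no internal vertex of $P$ is adjacent to $x$; also $w_u$ is not adjacent to $x$. Let $x'$ be any vertex of $N(w_u)\cap X_u$. Since $X_u$ is a clique, $xx'\in E(G)$, so $P$ together with the edges $zx$, $xx'$, and $x'w_u$ is a cycle, and by construction the only vertices of this cycle adjacent to $x$ are $z$ and $x'$. Applying the chordal structural fact that the two endpoints of any edge of a cycle have a common neighbor on that cycle to the edge $xz$, this common neighbor is forced to be $x'$, so $x'z\in E(G)$.

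Since $x'$ was an arbitrary element of $N(w_u)\cap X_u$, and $z$ is adjacent to $x$ as well (with $x\notin N(w_u)$), I would conclude that $(N(w_u)\cap X_u)\cup\{x\}\subseteq N(z)\cap X_u$, whence $|N(z)\cap X_u|>|N(w_u)\cap X_u|$ with $z\in V(W_u)$, contradicting the maximality of $w_u$. I expect the delicate point to be arranging the path so that $x$ has \emph{exactly} the two cycle-neighbors $z$ and $x'$ (which is what forces the guaranteed chord to be $zx'$ rather than some irrelevant edge), together with a clean justification of the common-neighbor-on-a-cycle property for chordal graphs; the latter follows by repeatedly splitting the cycle along a chord into a shorter cycle that still contains the edge $xz$, with a triangle as the base case.
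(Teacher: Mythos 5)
Your proposal is correct and follows essentially the same argument as the paper: the extremal choice of $w_{u}$ maximizing $|N(w_{u})\cap X_{u}|$, the cycle through $x$, $z$, and an arbitrary $x'\in N(w_{u})\cap X_{u}$, and the chordal common-neighbor-on-a-cycle fact forcing $zx'\in E(G)$, contradicting maximality. Your refinement of taking $P$ to be a shortest path from $w_{u}$ to the set $N(x)\cap V(W_{u})$ (so that no internal vertex of $P$ is adjacent to $x$) is a welcome tightening of the paper's terser step where the common neighbor of the edge $xz$ is forced to be $x'$.
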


\begin{lemma}\label{L:TRM2}If $G$ is a connected chordal graph with $Diam(G)=2Rad(G)-2,$ then $\langle C^{2}(G)\rangle$ is self-centered with radius two. Furthermore, for any $(R-1)$-stretched set $T$ of $G$ we have that \[\bigcap_{u\in T}X_{u}=\emptyset\] and for every $u\in T$ and $v\in T-v$ we have that $X_{u}$ dominates $C^{2}(G)$, $X_{u}\cap X_{v}\neq \emptyset$, and there exists $w_{u}\in N_{\leq}(u,R-2)\cap C^{2}(G)$ such that $X_{u}\subseteq N(w_{u})$.

\begin{proof}Let $T$ be a maximal $(R-1)$-stretched diametrical set and let \[\hat{X}=\bigcap_{u\in T}X_{u}.\] By contradiction, suppose $\hat{X}\neq \emptyset$. Since $\frac{D}{2}=R-1$, we have by Lemma~\ref{L: BasicC(G)1} there is an $x\in \hat{X}-C(G)$. This implies there is a $w\in V(G)$ such that $d(w,x)>R$. Note that $d(w,X_{u})\geq R-1$ for all $u\in T$ since $X_{u}$ is complete. By \ref{tStretched_condition:2} we know that $w\not\in N_{\leq}(u,R-1)$ for all $u\in T$. However, this implies $N(w,R-1)$ does not separate $T$ since $x\notin N(w,R-1)$. Furthermore, for all $u\in T$ we have that $d(w,u)=d(w,X_{u})+d(X_{u},u)\geq 2(R-1)=D$. We have a contradiction since Lemma~\ref{lem:largerstretchT} implies $T$ is not maximal. Thus, $\hat{X}=\emptyset$.

For every $u$ and $v$ in $T$, since $d(X_{u},X_{v})=D-2(R-1)=0$, we have by Lemma~\ref{L: BasicSDS}~\ref{BasicSDS:cond4} that $X_{u}\cap X_{v}\neq \emptyset$. However, since $\hat{X}=\emptyset$, we know that $|T|\geq 3$. Let $T=\{u_{1},\ldots,u_{T}\}$. Since $D=2(R-1)$, we know that $d(v,X_{u_{i}})\leq R-1$ for every $u_{i}\in T$ and $v\in V(G)$. By Lemma~\ref{L: domSep} we know that for every $u_{i}\in T$ there is a vertex $w_{u_{i}}\in V(W_{u_{i}})$ that is adjacent to every vertex in $X_{u_{i}}$. Thus, $w_{u_{i}}\in C(G)$ for every such $w_{u_{i}}$. By Lemma~\ref{BasicC(G)2_option2} $X_{u_{i}}$ dominates $\langle C(G)\rangle$. Furthermore, since $\hat{X}=\emptyset$, we may conclude that for every $v\in V(G)$ there is a $u_{i}\in T$ such that $d(v,w_{u_{i}})=2$. Thus, $Rad(\langle C(G) \rangle) = 2$ and $X_{u_{i}}\subseteq C^{2}(G)$. We can also see that $d(w_{u_{i}},v)\leq 2$ for every $u_{i}\in T$ and $v\in V(G)$. Thus, $w_{u_{i}}\in C^{2}(G)$ for every $u_{i}\in T$. Thus, $Rad(\langle C^{2}(G)\rangle)=2$

By Lemma~\ref{L: shortPath} we know that for any $z$ and $z'$ in $C^{2}(G)$ we have that $d_{G}(z,z')=d_{\langle C(G) \rangle}(z,z')=d_{\langle C^{2}(G) \rangle}(z,z')$. Therefore, \[2=Rad(\langle C(G) \rangle)\geq Diam(\langle C^{2}(G)\rangle)\geq Rad(\langle C^{2}(G) \rangle)=2.\] Thus, $\langle C^{2}(G) \rangle$ is self-centered with radius two.
\end{proof}
\end{lemma}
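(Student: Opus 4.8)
The plan is to fix a maximal $(R-1)$-stretched diametrical set $T$ (which exists by Lemma~\ref{lem:largerstretchT}) and to read off the whole structure from the separators $X_{u}$. Since $D=2R-2$ is even, $\big\lfloor\frac{D}{2}\big\rfloor=\big\lceil\frac{D}{2}\big\rceil=R-1$, and since $G$ is chordal (so $\big\lfloor\frac{k}{2}\big\rfloor=1$) Lemma~\ref{L: BasicSDS}~\ref{BasicSDS:cond1} tells me each $X_{u}$ is a clique contained in $N(u,R-1)$; hence every $x\in X_{u}$ is at distance exactly $R-1$ from $u$, and Lemma~\ref{L: BasicSDS}~\ref{BasicSDS:cond5} gives $\epsilon_{G}(x)\le R$, i.e. $X_{u}\subseteq C(G)$. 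These two facts, cliqueness and centrality of each $X_{u}$, are what drive the argument.

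First I would establish $\bigcap_{u\in T}X_{u}=\emptyset$ by contradiction. If $\hat X:=\bigcap_{u\in T}X_{u}\neq\emptyset$, then because $\big\lfloor\frac{D}{2}\big\rfloor=R-1\neq R$, Lemma~\ref{L: BasicC(G)1} forbids $C(G)=\hat X$, so some $x\in\hat X$ is non-central and there is a witness $w$ with $d(w,x)\ge R+1$. Using that each $X_{u}$ is a clique containing $x$, I would deduce $d(w,X_{u})\ge R-1$ for every $u$, and then, via condition~\ref{tStretched_condition:2}, that $w$ sits on the far side of every $X_{u}$, so $d(w,u)=d(w,X_{u})+d(X_{u},u)\ge 2(R-1)=D$ and thus $d(w,T)=D$. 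Since $x\in\hat X$ has $d(w,x)\ge R+1>R-1$, the set $N(w,R-1)$ misses $x$ and so cannot separate $T$; then Lemma~\ref{lem:largerstretchT} enlarges $T$, contradicting maximality. With $\hat X=\emptyset$ in hand, Lemma~\ref{L: BasicSDS}~\ref{BasicSDS:cond4} gives $d(X_{u},X_{v})=D-2(R-1)=0$, i.e. $X_{u}\cap X_{v}\neq\emptyset$ for all pairs, and since a pairwise-intersecting family with empty total intersection cannot have just two members, $|T|\ge 3$.

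Next I would produce the near-universal center vertices. Lemma~\ref{L: domSep} supplies, for each $u$, a vertex $w_{u}\in V(W_{u})$ adjacent to all of $X_{u}$; since $w_{u}$ is joined to $X_{u}$ and $d(v,X_{u})=R-1$ for $v\in T-u$, a short distance bookkeeping gives $\epsilon_{G}(w_{u})=R$, so $w_{u}\in C(G)$. The delicate point is the distance claim $w_{u}\in N_{\le}(u,R-2)$: the triangle inequality only yields $d(u,w_{u})\ge R-2$, so I must instead \emph{choose} $w_{u}$ in the BFS layer $N(u,R-2)$. I would obtain this by a layer-restricted rerun of Lemma~\ref{L: domSep}: every $x\in X_{u}$ has a BFS-parent in $N(u,R-2)\cap W_{u}$, and chordality applied to the cycles formed by two such parents together with their neighbours in the clique $X_{u}$ lets me merge these parents into one layer-$(R-2)$ vertex adjacent to all of $X_{u}$. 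Granting this, Lemma~\ref{L: BasicC(G)2}~\ref{BasicC(G)2_option2} shows $X_{u}$ dominates $\langle C(G)\rangle$, so for any $z\in C(G)$ a neighbour of $z$ in $X_{u}$ together with $w_{u}$ gives $d(z,w_{u})\le 2$; thus each $w_{u}$ has eccentricity at most $2$ in $\langle C(G)\rangle$, and, the matching lower bound aside, $Rad(\langle C(G)\rangle)=2$. Consequently $w_{u}\in C^{2}(G)$ and $X_{u}\subseteq C^{2}(G)$, and $X_{u}$ dominates $\langle C^{2}(G)\rangle$.

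Finally I would close with the self-centered conclusion. By Lemma~\ref{L: shortPath} induced shortest paths between vertices of $C(G)$ stay in $C(G)$, so $d_{G}$, $d_{\langle C(G)\rangle}$ and $d_{\langle C^{2}(G)\rangle}$ agree on pairs from $C^{2}(G)$. Each $z\in C^{2}(G)$ has eccentricity $Rad(\langle C(G)\rangle)=2$ in $\langle C(G)\rangle$, so any two vertices of $C^{2}(G)$ lie within distance $2$, giving $Diam(\langle C^{2}(G)\rangle)\le 2$; together with $Rad(\langle C^{2}(G)\rangle)=2$ this yields the sandwich $2=Rad(\langle C(G)\rangle)\ge Diam(\langle C^{2}(G)\rangle)\ge Rad(\langle C^{2}(G)\rangle)=2$, so $\langle C^{2}(G)\rangle$ is self-centered with radius two. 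I expect the main obstacle to be the controlled construction of $w_{u}$ in layer $R-2$ together with the two radius lower bounds: each amounts to converting the global condition $\hat X=\emptyset$ into the local statement that no single vertex dominates the relevant center, and both lean on chordality in the style of Lemma~\ref{L: domSep}.
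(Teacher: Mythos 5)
Your proposal reproduces the paper's architecture essentially step for step: the same maximal $(R-1)$-stretched set, the same appeal to Lemma~\ref{L: BasicC(G)1} plus Lemma~\ref{lem:largerstretchT} to force $\hat{X}=\bigcap_{u\in T}X_{u}=\emptyset$, the same use of Lemma~\ref{L: BasicSDS}~\ref{BasicSDS:cond4} for $X_{u}\cap X_{v}\neq\emptyset$ and $|T|\geq 3$, and the same combination of Lemma~\ref{L: domSep}, Lemma~\ref{L: BasicC(G)2}~\ref{BasicC(G)2_option2} and Lemma~\ref{L: shortPath} at the end. (One shared wrinkle: you note up front, via \ref{BasicSDS:cond5}, that $X_{u}\subseteq C(G)$, and then assert that some $x\in\hat{X}\subseteq X_{u}$ is non-central; the inference from ``$C(G)\neq\hat{X}$'' to ``some $x\in\hat{X}$ is non-central'' is exactly the paper's, so I set it aside, but be aware that as you phrased it the two claims are in direct tension.)

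The genuine gap is that you never prove the radius lower bounds, and you say so yourself (``the matching lower bound aside,'' ``I expect the main obstacle to be \dots the two radius lower bounds''). Without $Rad(\langle C(G)\rangle)\geq 2$ and $Rad(\langle C^{2}(G)\rangle)\geq 2$, nothing you establish rules out a vertex dominating the center, and the conclusion ``self-centered with radius two'' does not follow. Moreover, your stated plan for filling this --- ``lean on chordality in the style of Lemma~\ref{L: domSep}'' --- is aimed at the wrong tool: in the paper this is where $\hat{X}=\emptyset$ gets used, through the separation structure, not chordality. Concretely: $N[w_{u_{i}}]\subseteq V(W_{u_{i}})\cup X_{u_{i}}$; by Lemma~\ref{lem:separation} each $X_{u_{j}}$ misses $W_{u_{i}}$, and hence each connected set $W_{u_{i}}$ lies inside $W_{T-u_{j}}$, so the sets $W_{u_{i}}$ are pairwise disjoint; therefore a vertex within distance one of \emph{every} $w_{u_{i}}$ would lie in $X_{u_{i}}\cap X_{u_{j}}$ for all pairs, i.e.\ in $\hat{X}=\emptyset$. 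Thus every vertex is at distance exactly two from some $w_{u_{i}}\in C^{2}(G)$, which is the paper's terse ``for every $v\in V(G)$ there is a $u_{i}\in T$ such that $d(v,w_{u_{i}})=2$.'' This conversion of the global condition $\hat{X}=\emptyset$ into the local ``no dominating vertex'' statement is the heart of the lemma and is absent from your proposal. On the credit side, you correctly noticed that the clause $w_{u}\in N_{\leq}(u,R-2)$ needs an argument beyond Lemma~\ref{L: domSep} (which controls only membership in $W_{u}$, not the distance to $u$); the paper's own proof in fact never verifies this clause. But your ``layer-restricted rerun'' is only a sketch: merging two BFS-parents is easy via a forbidden induced $4$-cycle when $|X_{u}|=2$, while for larger cliques the merging step requires a genuine induction that you do not supply.
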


\begin{proof}[\textbf{Proof of Theorem~\ref{T: selfCentered}}] If $G$ is a complete graph, then $G$ is self-centered. If $\Delta(G)\leq n-2$, then $Rad(G)\geq 2$. If every two vertices of $G$ have a common neighbor, then $G$ has diameter two. Thus, $G$ is self-centered.

Suppose $G$ is a self-centered chordal graph that is not complete, then by Theorem~\ref{T: halfAtleast} we have that $Diam(G)=2$. For a be a maximal $1$-stretched set $T$ we have by Lemma~\ref{L: BasicSDS} that $\{X_{u} ~| ~u\in T\}$ satisfies the first four items. Let \[X=\bigcup_{u\in T}X_{u}.\] Suppose there are two vertices $z$ and $z'$ in $V(G)$ that do not share a common neighbor in $X$. If both $z$ and $z'$ are in $X$, then we have a contradiction since $|T|\geq 3$ there exists a $u$ and $v$ in $T$ such that $z$ and $z'$ share a neighbor in $X_{u}\cap X_{v}$. Suppose $z'\notin X$. Since $T$ is maximal, we have by Lemma~\ref{lem:largerstretchT} that $N(1,z')$ does not separates $T$. Thus, $zz'$ must be an edge and there exists a $u\in T$ such that $z\notin X_{u}$. Since $X_{u}$ dominates $G$, there is an $x\in N(z)\cap X_{u}$ and an $x'\in N(Z')\cap (X_{u}-x)$. However, we have a contradiction since $zz'x'x$ would be an induced four cycle. This proves \ref{T: selfCentered}.
\end{proof}

\begin{proof}[\textbf{Proof of Theorem~\ref{T: diamG3}}] Since $Rad(\langle C(G)\rangle)=2$, there exists a set $\Psi$ of cliques in $\langle C(G) \rangle$ that satisfies the conditions of Theorem~\ref{T: selfCentered}. The conditions $|\Psi|\geq 3$ and $\bigcap_{X\in \Psi}X=\emptyset$ carry over from Theorem~\ref{T: selfCentered}. 

Let $X$ be an arbitrary clique in $\Psi$. We need to check that $X$ both separates and dominates $G$.

Suppose $X$ does not separate $G$. Since $X$ separates $\langle C(G) \rangle$, there exists two vertices $z$ and $z'$ in $\langle C(G) \rangle$ that are separated by $X$ in $\langle C(G) \rangle$ but not in $G$. Thus, there exists a shortest path $P$ from $z$ and $z'$ in $G$ such that $V(P-\{z,z'\})\subseteq V(G)-C(G)$. We get a contradiction since Lemma~\ref{L: shortPath} implies every vertex of $P$ is in $C(G)$. Thus, $X$ separates $G$. 

We already know that $X$ dominates $\langle C(G) \rangle$, thus, we need to check the vertices with eccentricity three. Since $X$ separates some $z$ and $z'$ in $C(G)$, and both $z$ and $z'$ have eccentricity two, every vertex must have a neighbor in $X$. Thus, the proof is complete. \end{proof}

The next two lemmas were also proved in \cite{CGCG}. We will provide our proofs for the completeness of the paper.

\begin{lemma}\label{L:RCG2}If $G$ is a connected chordal graph with $Diam(G)\leq 3$ and $Rad(\langle C(G)\rangle)=2$, then $G$ is the center of some chordal graph.
\begin{proof}We proceed by creating a chordal graph that has $G$ as its center. Let $A=\{a_{1},\ldots, a_{|A|}\}$  be the set of vertices of $G$ with eccentricity equal to two. Let $\{w_{1},\ldots, w_{|A|}\}$ be a set of vertices not in $G$. We create a new graph $G'$ by adding the edges $w_{i}a_{i}$ for each $i$. Choose an arbitrary $a_{j}\in A$. The condition $Rad(\langle C(G) \rangle)=2$ implies that $A$ does contain a vertex that dominates $A$. Thus, there is a $a_{k}\in A$ such that $d(a_{j},a_{k})=2$, which implies $d(a_{j},w_{k})=3$ and $d(w_{j},w_{k})=4$. Since $Diam(G)\leq 3$, we have by our choice of $A$ that $\epsilon_{G}(b)=3$ for any $b\in G-A$. Furthermore, since $d_{G}(b, a_{k})\leq 2$ for every $a_{k}\in A$, we have that $d_{G'}(b, w_{k})\leq 3$. For $a_{j}\in A$, we have that $\epsilon_{G'}(a_{j})=3$, $\epsilon_{G'}(w_{j})=4$, and $\epsilon_{G'}(b)=3$ for all $b\in G-A$. Thus, $G$ is the center of $G'$.
\end{proof}
\end{lemma}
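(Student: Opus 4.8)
The plan is to construct, from $G$, a chordal graph $G'$ whose center is exactly $V(G)$, by attaching one pendant vertex to each vertex of $G$ that already has eccentricity two. First I would pin down the radius of $G$ itself: since $G$ is chordal with $Diam(G)\leq 3$, Theorem~\ref{T: halfAtleast} gives $R-1\leq \lfloor D/2\rfloor\leq 1$ and hence $Rad(G)\leq 2$, while the hypothesis $Rad(\langle C(G)\rangle)=2$ rules out a universal vertex (which would force $Rad(G)=1$ and a complete, radius-at-most-one center). So $Rad(G)=2$, the set $A$ of eccentricity-two vertices is precisely $C(G)$, and the remaining vertices of $G$ have eccentricity three.

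Next I would define $G'$ by adjoining, for each $a_i\in A$, a new vertex $w_i$ whose only neighbor is $a_i$. Because each $w_i$ is a pendant (hence simplicial) vertex, no new cycle is created, so $G'$ remains chordal; moreover the pendants lie on no shortest path between two original vertices, so $d_{G'}(x,y)=d_G(x,y)$ for all $x,y\in V(G)$, and $\langle C(G')\rangle$ restricted to $V(G)$ is exactly $G$. It then remains to compute eccentricities in $G'$.

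The heart of the argument is the eccentricity of a center vertex $a_j\in A$. The hypothesis $Rad(\langle C(G)\rangle)=2$ means no vertex of the center dominates the center, so there is some $a_k\in A$ with $d_{\langle C(G)\rangle}(a_j,a_k)=2$; by Lemma~\ref{L: shortPath} this distance agrees with $d_G(a_j,a_k)$, giving $d_{G'}(a_j,w_k)=3$ and hence $\epsilon_{G'}(a_j)\geq 3$. The matching upper bound follows since every original vertex is within distance $\epsilon_G(a_j)=2$ of $a_j$ and every pendant $w_k$ satisfies $d_{G'}(a_j,w_k)=d_G(a_j,a_k)+1\leq 3$, so $\epsilon_{G'}(a_j)=3$. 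For a vertex $b\in V(G)-A$ one has $\epsilon_G(b)=3$, while $d_{G'}(b,w_k)=d_G(b,a_k)+1\leq 3$ because $a_k$ has eccentricity two, so again $\epsilon_{G'}(b)=3$. Finally each pendant $w_j$ satisfies $\epsilon_{G'}(w_j)=\epsilon_{G'}(a_j)+1=4$.

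Putting these together, every vertex of $V(G)$ has eccentricity three in $G'$ while every pendant has eccentricity four, so $Rad(G')=3$ and $C(G')=V(G)$; that is, $G$ is the center of the chordal graph $G'$. I expect the main obstacle to be the precise eccentricity bookkeeping for the center vertices: one must use $Rad(\langle C(G)\rangle)=2$ to guarantee a center vertex at distance \emph{exactly} two (so that $\epsilon_{G'}(a_j)$ reaches three) and simultaneously invoke the eccentricity-two bound to keep each $a_j$ from reaching distance four, with Lemma~\ref{L: shortPath} serving as the bridge that transfers these center distances back and forth to $G$.
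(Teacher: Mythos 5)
Your proposal is correct and follows essentially the same construction as the paper: attach a pendant vertex $w_i$ to each eccentricity-two vertex $a_i$, then use $Rad(\langle C(G)\rangle)=2$ to find a non-dominated $a_k$ at distance exactly two from each $a_j$, forcing every original vertex to have eccentricity three in $G'$ and every pendant to have eccentricity four. The only difference is that you explicitly verify details the paper leaves implicit (that $Rad(G)=2$, so $A=C(G)$, and that pendant attachment preserves chordality), which strengthens rather than changes the argument.
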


\begin{lemma}\label{L:TDDC} If $G$ is a connected chordal graph with $Diam(G)\leq 3$, which contains two disjoint cliques such that each dominates $G$, then $G$ is the center of some chordal graph.
\begin{proof}Let $W_{1}$ and $W_{2}$ be the two disjoint cliques in $G$ that each dominate $G$. For each $i$, add two vertices $w_{i}$ and $w_{i}'$ such that $w_{i}$ is adjacent $w'_{i}$ and to every vertex in $W_{i}$ to create new graph $G'$. For each $i$, let $y\in V(G)-W_{i}$. Since $W_{i}$ dominates $G$, we have that $d(y,w_{i}')=3$. Thus, $\epsilon_{G'}(y)=3$ for every $y\in V(G)$. Furthermore, $d(w_{1},w_{2}')=4$, $d(w_{2},w_{1}')=4$, and $d(w'_{1},w'_{2})=5$. Thus, $Rad(G')=3$, $Diam(G')=5$, and $G$ is the center of $G'$. 
\end{proof}
\end{lemma}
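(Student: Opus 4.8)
The plan is to build a chordal supergraph $G'$ in which the original vertex set $V(G)$ is precisely the set of vertices of minimum eccentricity. Write $W_{1}$ and $W_{2}$ for the two disjoint dominating cliques guaranteed by hypothesis. For each $i\in\{1,2\}$ I would adjoin an \emph{apex} vertex $w_{i}$ joined to every vertex of $W_{i}$, together with a \emph{pendant} vertex $w_{i}'$ whose only neighbor is $w_{i}$, exactly as in the statement. The intuition is that $w_{i}$ sits at distance at most $2$ from all of $G$ (because $W_{i}$ dominates), while the pendant $w_{i}'$ pushes the eccentricities of the two appended gadgets strictly above those of $V(G)$.

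Before computing distances I would first check that $G'$ is chordal, which I expect to be the crux even though it turns out to be short. Each $w_{i}'$ has degree one, so it cannot lie on any cycle; hence no new induced cycle passes through a pendant. For the apex $w_{i}$, its neighborhood is $W_{i}\cup\{w_{i}'\}$, and since $W_{i}$ is a clique while $w_{i}'$ is adjacent to no vertex of $W_{i}$, the only non-adjacent pair inside $N(w_{i})$ involves $w_{i}'$. An induced cycle of length at least four through $w_{i}$ would need two non-adjacent neighbors of $w_{i}$ as its cycle-neighbors, forcing $w_{i}'$ onto the cycle, which is impossible by degree one. Thus every induced cycle of length at least four lies entirely in $G$, and since $G$ is chordal, so is $G'$.

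It then remains to pin down the eccentricities. Since each $W_{i}$ dominates $G$, every $y\in V(G)$ satisfies $d(y,W_{i})\le 1$, so $d(y,w_{i})\le 2$ and $d(y,w_{i}')\le 3$; because the apexes connect $V(G)$-vertices only through the clique $W_{i}$, they introduce no shortcuts and distances inside $G$ are unchanged, so $Diam(G)\le 3$ gives $\epsilon_{G'}(y)\le 3$. The key point, and the one place the disjointness of $W_{1}$ and $W_{2}$ is essential, is the matching lower bound: every $y$ lies outside at least one $W_{i}$, and for that $i$ we have $d(y,w_{i}')=3$, whence $\epsilon_{G'}(y)=3$ for all $y\in V(G)$. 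Finally, using $d(W_{1},W_{2})=1$ (again from domination together with disjointness) I would compute $d(w_{i},w_{j}')=4$ for $i\ne j$ and $d(w_{1}',w_{2}')=5$, so that $\epsilon_{G'}(w_{i})=4$ and $\epsilon_{G'}(w_{i}')=5$. Hence $Rad(G')=3$ and the set of vertices of eccentricity $3$ is exactly $V(G)$, i.e.\ $G$ is the center of the chordal graph $G'$. The main obstacle is the chordality verification, but the disjointness hypothesis is what makes the eccentricity lower bound go through and is therefore the other point requiring care.
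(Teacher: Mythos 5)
Your proposal is correct and uses exactly the same construction as the paper: attach an apex $w_{i}$ joined to the dominating clique $W_{i}$ plus a pendant $w_{i}'$, then compute the same eccentricities ($3$ on $V(G)$, $4$ on the apexes, $5$ on the pendants). The only difference is that you explicitly verify chordality of $G'$ (via the degree-one pendants and the clique neighborhoods of the apexes), a step the paper leaves implicit but which is indeed needed for the lemma's conclusion.
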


\begin{proof}[\textbf{Proof of Theorem~\ref{T: CIFF3}}]Assume $G$ is the center of some chordal graph $H$. Since every induced subgraph of a chordal graph is chordal, $G$ is chordal. By Theorem~\ref {T: ISCenter}, we have that $Diam(G)\leq 3$. By Theorem~\ref{T: halfAtleast} there are three cases to consider. If $D=2R-1$, then by Lemma~\ref{L:TRMO} $G$ has two disjoint cliques that each dominates $G$. If $D=2R$, then $G$ is a clique with more than two vertices. Choose any two vertices to be the two disjoint cliques. If $D=2R-2$, then by Lemma~\ref{L:TRM2}, $\langle C \rangle$ is self-centered with radius two.

Suppose $G$ is a chordal graph with $Diam(G)\leq 3$.  If $G$ has two disjoint cliques such that each dominates $G$, then by Lemma~\ref{L:TDDC} $G$ is the center of some chordal graph. If $\langle C \rangle$ is self-centered with radius two, then by Lemma~\ref{L:RCG2} $G$ is the center of some chordal graph.
\end{proof}

\end{document}